\newtheorem{theorem}{Theorem}
\newtheorem{corollary}[theorem]{Corollary}
\newtheorem{definition}[theorem]{Definition}
\newtheorem{lemma}[theorem]{Lemma}
\newtheorem{proposition}[theorem]{Proposition}
\newenvironment{proof}[1][Proof]{\textbf{#1.} }{\ \rule{0.5em}{0.5em}}
\title{The Uniform Order Convergence Structure on $\mathcal{ML}\left(X\right)$}
\author{J H van der Walt\\ \\
Department of Mathematics and Applied Mathematics\\ University of
Pretoria\\ Pretoria 0002\\ South Africa}
\begin{document}

\maketitle

\begin{abstract}
The aim of this paper is to set up appropriate uniform convergence
spaces in which to reformulate and enrich the Order Completion
Method \cite{Obergugenberger and Rosinger} for nonlinear PDEs.  In
this regard, we consider an appropriate space
$\mathcal{ML}\left(X\right)$ of normal lower semi-continuous
functions.  The space $\mathcal{ML}\left(X\right)$ appears in the
ring theory of $\mathcal{C}\left(X\right)$ and its various
extensions \cite{Fine Gillman and Lambek}, as well as in the
theory of nonlinear, PDEs \cite{Obergugenberger and Rosinger} and
\cite{Rosinger 3}.  We define a uniform convergence structure, in
the sense of \cite{Beattie and Butzmann}, on
$\mathcal{ML}\left(X\right)$ such that the induced convergence
structure is the order convergence structure, as introduced in
\cite{Anguelov and van der Walt} and \cite{van der Walt 1}.  The
uniform convergence space completion of
$\mathcal{ML}\left(X\right)$ is constructed as the space all
normal lower semi-continuous functions on $X$. It is then shown
how these ideas may be applied to solve nonlinear PDEs. In
particular, we construct generalized solutions to the
Navier-Stokes Equations in three spatial dimensions, subject to an
initial condition.
\end{abstract}
\mbox{ }\\
Keywords  General Topology,  Uniform Convergence Structures,
 Function
Spaces,  Ordered Spaces \\
\\
2000 Mathematics Subject Classification 54A20, 46E05, 06F30

\section{Introduction}

It is widely held that, in contradistinction to ODEs, there can be
no general, type independent theory for the existence and
regularity of the solutions to PDEs \cite{Arnold}, \cite{Evans}.
As seen in the sequel, this is in fact a misunderstanding which is
often attributed to the more complex geometry of $\mathbb{R}^{n}$,
with $n\geq 2$, as apposed to that of $\mathbb{R}$ which is
relevant to ODEs alone, see \cite{Arnold}.  Indeed, the
difficulties that are typically encountered when solving PDEs by
the usual function analytic methods, which are perceived to arise
form the complicated geometry of $\mathbb{R}^{n}$, are rather due
to the inherent limitations of the function analytic methods
themselves, and are therefore technical obstacles, rather than
conceptual ones.

The above is exemplified by the appearance of not only one, but
two general, type independent theories for the solutions of
nonlinear PDEs. The Central Theory of PDEs, developed by Neuberger
\cite{Neuberger 1}, see also \cite{Neuberger 4}, is based on a
generalized method of steepest descent in suitably constructed
Hilbert Spaces.  The Order Completion Method, as developed by
Oberguggenberger and Rosinger \cite{Obergugenberger and Rosinger},
is based on the Dedekind completion of suitable spaces of
equivalence classes of functions.

\subsection{The Order Completion Method}

The method of Order Completion results in the existence of
generalized solutions to arbitrary, continuous nonlinear PDEs of
the form
\begin{eqnarray}
T\left(x,D\right)u\left(x\right)=f\left(x\right)\mbox{,
}x\in\Omega\label{PDE}
\end{eqnarray}
with the right hand term $f$ a continuous function of
$x\in\Omega$, and the partial differential operator
$T\left(x,D\right)$ defined through a jointly continuous function
\begin{eqnarray}
F:\Omega\times\mathbb{R}^{M}\rightarrow \mathbb{R}\nonumber
\end{eqnarray}
by
\begin{eqnarray}
T\left(x,D\right)u:x\mapsto
F\left(x,u\left(x\right),...,D^{\alpha}u\left(x\right),...\right)
\end{eqnarray}
With the PDE (\ref{PDE}) one associates a mapping
\begin{eqnarray}
T:\mathcal{M}^{m}\left(\Omega\right)\rightarrow
\mathcal{M}^{0}\left(\Omega\right)\nonumber
\end{eqnarray}
where $\mathcal{M}^{m}\left(\Omega\right)$ is the space of
equivalence classes of functions which are continuously
differentiable up to order $m$ everywhere except on some closed
nowhere dense set \cite{Obergugenberger and Rosinger}, under the
equivalence relation
\begin{eqnarray}
u\sim v\Leftrightarrow \left(\begin{array}{ll}
\exists & \Gamma\subseteq\Omega\mbox{ closed nowhere dense :} \\
& \begin{array}{ll}
1) & u,v\in\mathcal{C}^{m}\left(\Omega\right) \\
2) & x\in\Omega\setminus\Gamma\Rightarrow v\left(x\right)=u\left(x\right) \\
\end{array} \\
\end{array}\right)
\end{eqnarray}
The mapping $T$ induces an equivalence relation $\sim_{T}$ on
$\mathcal{M}^{m}\left(\Omega\right)$ through
\begin{eqnarray}
\begin{array}{ll}
\forall & u,v\in\mathcal{M}^{m}\left(\Omega\right)\mbox{ :} \\
& u\sim_{T}v \Leftrightarrow Tu=Tv \\
\end{array}\label{TEqv}
\end{eqnarray}
With the mapping $T$ one associates in a canonical way an
injective mapping
\begin{eqnarray}
\widehat{T}:\mathcal{M}^{m}_{T}\left(\Omega\right) \rightarrow
\mathcal{M}^{0}\left(\Omega\right)\nonumber
\end{eqnarray}
where $\mathcal{M}^{m}_{T}\left(\Omega\right)$ denotes the
quotient space $\mathcal{M}^{m}\left(\Omega\right)/\sim_{T}$.  The
space $\mathcal{M}^{m}_{T}\left(\Omega\right)$ is ordered through
\begin{eqnarray}
\begin{array}{ll}
\forall & U,V\in\mathcal{M}^{m}_{T}\left(\Omega\right)\mbox{ :} \\
& U\leq_{T}V\Leftrightarrow\widehat{T}U\leq \widehat{T}V \\
\end{array}\label{TOrder}
\end{eqnarray}
so that $\widehat{T}$ is an order isomorphic embedding.  The
mapping $\widehat{T}$ extends uniquely to an order isomorphic
embedding
\begin{eqnarray}
\widetilde{T}^{\sharp}:\mathcal{M}^{m}_{T}\left(\Omega\right)^{\sharp}
\rightarrow \mathcal{M}^{0}\left(\Omega\right)^{\sharp}
\end{eqnarray}
where $\mathcal{M}^{m}_{T}\left(\Omega\right)^{\sharp}$ and
$\mathcal{M}^{0}\left(\Omega\right)^{\sharp}$ denote the Dedekind
order completions of $\mathcal{M}^{m}_{T}\left(\Omega\right)$ and
$\mathcal{M}^{0}\left(\Omega\right)^{\sharp}$, respectively.  This
is summarized in the following commutative diagram:\\
\begin{math}
\setlength{\unitlength}{1cm} \thicklines
\begin{picture}(13,6)
\put(2.4,5){$\mathcal{M}_{T}^{m}\left(\Omega\right)$}
\put(3.8,5.1){\vector(1,0){6.5}}
\put(10.5,5){$\mathcal{M}^{0}\left(\Omega\right)$}
\put(6.6,5.4){$\widehat{T}$} \put(3.0,4.8){\vector(0,-1){3.3}}
\put(2.4,1){$\mathcal{M}_{T}^{m}\left(\Omega\right)^{\sharp}$}
\put(4.0,1.1){\vector(1,0){6.2}}
\put(6.6,1.3){$\widehat{T}^{\sharp}$}
\put(10.5,1){$\mathcal{M}^{0}\left(\Omega\right)^{\sharp}$}
\put(11.2,4.8){\vector(0,-1){3.3}}
\end{picture}
\end{math}
\\
Subject to a mild assumption on the PDE (\ref{PDE}), one has
\begin{eqnarray}
\begin{array}{ll}
\forall & f\in\mathcal{C}^{0}\left(\Omega\right)\mbox{ :} \\
\exists ! & U^{\sharp}\in\mathcal{M}^{m}_{T}\left(\Omega\right)^{\sharp}\mbox{ :} \\
& \widehat{T}^{\sharp}U^{\sharp}=f \\
\end{array}\nonumber
\end{eqnarray}
where
$U^{\sharp}\in\mathcal{M}^{m}_{T}\left(\Omega\right)^{\sharp}$ is
the unique generalized solution to (\ref{PDE}).  The unique
generalized solution should be interpreted as the
\textit{totality} of all super solutions, sub solutions and exact
solutions to (\ref{PDE}).  Recently \cite{Anguelov and Rosinger 1}
it was shown that the generalized solutions to a PDE of the form
(\ref{PDE}) may be assimilated with usual Hausdorff continuous
functions, in the sense that there is an order isomorphism between
$\mathcal{M}^{m}_{T}\left(\Omega\right)^{\sharp}$ and the space
$\mathbb{H}_{nf}\left(\Omega\right)$ of all nearly finite
Hausdorff continuous functions.

Taking into account the universality of the existence and
regularity result just described, one may notice that there is a
large scope for further enrichment of the basic theory of Order
Completion \cite{Obergugenberger and Rosinger}.  In particular,
the following may serve as guidelines for such an enrichment.
\begin{eqnarray}
&\mbox{(A)}& \mbox{The space of generalized solutions to
(\ref{PDE}) may depend on the PDE}\nonumber\\
&&\mbox{operator $T\left(x,D\right)$}\nonumber \\
&\mbox{(B)}& \mbox{There is no differential structure on the space
of generalized solutions}\nonumber
\end{eqnarray}
In order to accommodate (A), one may do away with the equivalence
relation (\ref{TEqv}) on $\mathcal{M}^{m}\left(\Omega\right)$ and
consider a partial order other than (\ref{TOrder}), which does not
depend on the partial differential operator $T\left(X,D\right)$.
Indeed, somewhat in the spirit of Sobolev, one may consider the
partial order
\begin{eqnarray}
\begin{array}{ll}
\forall & u,v\in\mathcal{M}^{m}\left(\Omega\right)\mbox{ :} \\
& u\leq_{D}v \Leftrightarrow \left(\begin{array}{ll}
\forall & |\alpha|\leq m\mbox{ :} \\
& D^{\alpha}u\leq D^{\alpha}v \\
\end{array}\right)  \\
\end{array}
\end{eqnarray}
which could also solve (B).  However, such an approach presents
several difficulties.  In particular, the existence of generalized
solutions in the Dedekind completion of the partially ordered set
$\left(\mathcal{M}^{m}\left(\Omega\right),\leq_{D}\right)$ is not
clear.  In fact, the possibly nonlinear mapping $T$ associated
with the PDE (\ref{PDE}) cannot be extended to the Dedekind
completion in a unique and meaningful way, unless $T$ satisfies
some additional and rather restrictive conditions.  We mention
that the use of partial orders other than (\ref{TOrder}) was
investigated in \cite[Section 13]{Obergugenberger and Rosinger},
but the partial orders that are considered are still tied to the
PDE operator $T\left(XD\right)$.  Regarding (B), we may recall
that there is in general no connection between the usual order on
$\mathcal{M}^{m}\left(\Omega\right)$ and the derivatives of the
functions that are its elements.

\subsection{The Order Convergence Structure}

One possible way of going beyond the basic theory of Order
Completion is motivated by the fact that the process of taking the
supremum of a subset $A$ of a partially ordered set $X$ is
essentially a process of approximation.  Indeed,
\begin{eqnarray}
x_{0}=\sup A\nonumber
\end{eqnarray}
means that the set $A$ approximates $x_{0}$ arbitrarily close from
below.  Approximation, however, is essentially a topological
process.  Hence a topological type model for the process of
Dedekind completion of $\mathcal{M}^{0}\left(\Omega\right)$ may
serve as a starting point for the enrichment of the Order
Completion Method.

In this regard, we recall that there are several useful modes of
convergence on a partially ordered set, defined in terms of the
partial order, see for instance \cite{Birkhoff}, \cite{Luxemurg
and Zaanen} and \cite{Peressini}.  In particular, we consider the
order convergence of sequences defined on a partially ordered set
$X$ as
\begin{eqnarray}
&&\left(x_{n}\right)\mbox{ order converges to }x\in X
\Leftrightarrow\nonumber\\
&&\Leftrightarrow \left(\begin{array}{ll}
  \exists & \left(\lambda_{n}\right)\mbox{, }\left(\mu_{n}\right)\subset X\mbox{ :} \\
    & \begin{array}{ll}
      1) & n\in\mathbb{N}\Rightarrow \lambda_{n}\leq \lambda_{n+1}\leq x_{n}\leq \mu_{n+1}\leq \mu_{n} \\
      2) & \sup\left\{\lambda_{n}\mbox{ : }n\in\mathbb{N}\right\}=x=\inf\left\{\mu_{n}\mbox{ : }n\in\mathbb{N}\right\} \\
    \end{array} \\
\end{array}\right)\label{OCDef}
\end{eqnarray}
It is well known that the order convergence of sequences is in
general not topological, as is demonstrated in \cite{van der
Walt}.  That is, for a partially ordered set $X$ there is no
topology $\tau$ on $X$ such that the $\tau$-convergent sequences
are exactly the order convergent sequences.  However, see
\cite{Anguelov and van der Walt} and \cite{van der Walt 1}, for a
$\sigma$-distributive lattice $X$ there exists a convergence
structure $\lambda_{o}$, in the sense of \cite{Beattie and
Butzmann}, on $X$ that induces the order convergence of sequences
through
\begin{eqnarray}
\begin{array}{ll}
\forall & x\in X\mbox{ :} \\
\forall & \left(x_{n}\right)\subset X\mbox{ :} \\
& \left(x_{n}\right)\mbox{ order converges to }x\Leftrightarrow [\{\{x_{n}\mbox{ : }n\geq k\}\mbox{ : }k\in\mathbb{N}\}]\in\lambda_{o}\left(x\right) \\
\end{array}\label{SeqConv}
\end{eqnarray}
In particular, the order convergence structure, defined and
studied in \cite{Anguelov and van der Walt} and \cite{van der Walt
1} induces the order convergence of sequences through
(\ref{SeqConv}), and is defined as
\begin{eqnarray}
\begin{array}{ll}
  \forall & x\in X\mbox{ :} \\
  \forall & \mathcal{F}\mbox{ a filter on $X$ :} \\
    & \mathcal{F}\in\lambda_{o}\left(x\right) \Leftrightarrow \left(\begin{array}{ll}
      \exists & \left(\lambda_{n}\right)\mbox{, }\left(\mu_{n}\right)\subset X\mbox{ :} \\
        & \begin{array}{ll}
          1) & n\in\mathbb{N}\Rightarrow \lambda_{n}\leq \lambda_{n+1}\leq x \leq \mu_{n+1}\leq\mu_{n}\\
          2) & \sup\left\{\lambda_{n}\mbox{ : }n\in\mathbb{N}\right\} = x = \inf\left\{\mu_{n}\mbox{ : }n\in\mathbb{N}\right\} \\
          3) & [\left\{[\lambda_{n},\mu_{n}]\mbox{ : }n\in\mathbb{N}\right\}]\subseteq\mathcal{F} \\
        \end{array} \\
    \end{array}\right) \\
\end{array}\label{OCSDef}
\end{eqnarray}
and is Hausdorff, regular and first countable, see \cite{van der
Walt 1}.

A particular case of the above occurs when $X$ is an Archimedean
vector lattice.  In this case the convergence structure
$\lambda_{o}$ is a vector space convergence structure, and as such
it is induced by a uniform convergence structure, in the sense of
\cite{Gahler 1}. Indeed, the Cauchy filters are characterized as
\begin{eqnarray}
\begin{array}{ll}
\forall & \mathcal{F}\mbox{ a filter on $X$ :} \\
& \mathcal{F}\mbox{ is Cauchy }\Leftrightarrow \mathcal{F}-\mathcal{F}\in\lambda_{o}\left(x\right) \\
\end{array}\nonumber
\end{eqnarray}
The convergence vector space completion of an Archimedean vector
lattice $X$, equipped with the order convergence structure
$\lambda_{o}$ may be constructed as the Dedekind
$\sigma$-completion $X^{\sharp}$ of $X$, equipped with the order
convergence structure, see \cite{van der Walt 1}.  If $X$ is order
separable, then the completion of $X$ is in fact its Dedekind
completion.  In the particular case when
$X=\mathcal{C}\left(Y\right)$, with $Y$ a metric space, then the
convergence vector space completion is the set
$\mathbb{H}_{ft}\left(X\right)$ of finite Hausdorff continuous
functions on $Y$, which is the Dedekind completion of
$\mathcal{C}\left(Y\right)$.

Let us now consider the possibility of applying the above results
to the problem of solving nonlinear PDEs.  In this regard,
consider a nonlinear PDE of the form (\ref{PDE}), and the
associated mapping
\begin{eqnarray}
T:\mathcal{M}^{m}\left(\Omega\right)\rightarrow
\mathcal{M}^{0}\left(\Omega\right)\nonumber
\end{eqnarray}
The Order Completion Method is based on the abundance of
\textit{approximate solutions} to (\ref{PDE}), which are elements
of $\mathcal{M}^{m}\left(\Omega\right)$, and in general one cannot
expect these approximations to be continuous, let alone
\textit{sufficiently smooth}, on the whole of $\Omega$.  Moreover,
the space $\mathbb{H}_{ft}\left(\Omega\right)$ does not contain
the space $\mathcal{M}^{0}\left(\Omega\right)$.

On the other hand, the space $\mathcal{M}^{0}\left(\Omega\right)$
is an order separable Archimedean vector lattice
\cite{Obergugenberger and Rosinger}, and therefore one may equip
it with the order convergence structure.  The completion of this
space will be its Dedekind completion
$\mathcal{M}^{0}\left(\Omega\right)^{\sharp}$, as desired.
However, there are several obstacles.  If one equips
$\mathcal{M}^{m}\left(\Omega\right)$ with the subspace convergence
structure, then the nonlinear mapping $T$ is not necessarily
continuous.  Moreover, the quotient space
$\mathcal{M}^{m}_{T}\left(\Omega\right)$ is \textit{not} a linear
space, so that the completion process for convergence vector
spaces does not apply.  It is therefore necessary to develop a
\textit{nonlinear} topological model for the Dedekind completion
of $\mathcal{M}\left(\Omega\right)$.

\section{Spaces of Lower Semi-Continuous Functions}

The notion of a normal lower semi-continuous function,
respectively normal upper semi-continuous function, was introduced
by Dilworth \cite{Dilworth} in connection with the Dedekind
completion of spaces of continuous functions.  Dilworth introduced
the concept for \textit{bounded}, real valued functions.
Subsequently the definition was extended to \textit{locally
bounded} functions \cite{Anguelov et al}.  The definition extends
in a straight forward way to extended real valued functions.  In
particular, a function $u:X \rightarrow \overline{\mathbb{R}}$,
with $X$ a topological space, is normal lower semi-continuous
whenever
\begin{eqnarray}
\left(I\circ S\right)\left(u\right)\left(x\right) =
u\left(x\right)\mbox{, }x\in X\label{NLSDef}
\end{eqnarray}
where $I$ and $S$ are the Lower- and Upper Baire Operators, see
\cite{Anguelov}, \cite{Baire} and \cite{Sendov}, defined as
\begin{eqnarray}
I\left(u\right)\left(x\right)= \sup\{\inf\{u\left(y\right)\mbox{ :
}y\in V\}\mbox{ : }V\in\mathcal{V}_{x}\}\mbox{, }x\in
X\label{IDef}
\end{eqnarray}
\begin{eqnarray}
S\left(u\right)\left(x\right)= \inf\{\sup\{u\left(y\right)\mbox{ :
}y\in V\}\mbox{ : }V\in\mathcal{V}_{x}\}\mbox{, }x\in
X\label{SDef}
\end{eqnarray}
where $u$ is any extended real valued function on $X$.  A normal
lower semi-continuous function $u$ is called \textit{nearly
finite} if the set
\begin{eqnarray}
\{x\in X\mbox{ : }u\left(x\right)\in\mathbb{R}\}\nonumber
\end{eqnarray}
is open and dense in $X$.  We denote the space of all nearly
finite normal lower semi-continuous functions by
$\mathcal{NL}\left(X\right)$.  The space
$\mathcal{NL}\left(X\right)$ is ordered in a pointwise way through
\begin{eqnarray}
\begin{array}{ll}
\forall & u,v\in\mathcal{NL}\left(X\right)\mbox{ :} \\
& u\leq v\Leftrightarrow \left(\begin{array}{ll}
\forall & x\in X\mbox{ :} \\
& u\left(x\right)\leq v\left(x\right) \\
\end{array}\right) \\
\end{array}
\end{eqnarray}
The space $\mathcal{NL}\left(X\right)$ satisfies the following
properties.
\begin{theorem}\label{NLDedComp}
The space $\mathcal{NL}\left(X\right)$ is Dedekind complete.
Moreover, if $A\subseteq \mathcal{NL}\left(X\right)$ is bounded
from above, and $B\subseteq \mathcal{NL}\left(X\right)$ is bounded
from below, then
\begin{eqnarray}
\sup A= \left(I\circ S\right)\left(\phi\right) \nonumber
\end{eqnarray}
\begin{eqnarray}
\inf B= \left(I\circ S\circ I\right)\left(\varphi\right) \nonumber
\end{eqnarray}
where
\begin{eqnarray}
\phi:X\ni x\mapsto \sup\{u\left(x\right)\mbox{ : }u\in
A\}\nonumber
\end{eqnarray}
and
\begin{eqnarray}
\varphi:X\ni x\mapsto \inf\{u\left(x\right)\mbox{ : }u\in
B\}\nonumber
\end{eqnarray}
\end{theorem}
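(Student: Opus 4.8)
The plan is to exhibit the two candidate functions directly as the required extremal elements. Since a partially ordered set in which every nonempty subset bounded above has a supremum is automatically Dedekind complete (the bounded-below case then follows by considering sets of lower bounds), it suffices to establish the two displayed formulas. I would work throughout with the standard properties of the Baire operators $I$ and $S$ (see \cite{Anguelov}, \cite{Sendov}): both are monotone; $I(w)\le w\le S(w)$ for every extended-real-valued $w$; $I(w)$ is always lower semi-continuous and $I$ fixes the lower semi-continuous functions; and $I\circ S$ is idempotent, with fixed-point set exactly the normal lower semi-continuous functions — equivalently, $(I\circ S)(w)$ is normal lower semi-continuous for every $w$, which is merely a restatement of (\ref{NLSDef}). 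Observe also that a normal lower semi-continuous function, being of the form $I(S(u))$, is in particular lower semi-continuous.

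For the supremum: $\phi$ is a well-defined extended-real-valued function since $A$ is bounded above, and $(I\circ S)(\phi)$ is normal lower semi-continuous. It is an upper bound for $A$, because for $u\in A$ we have $u\le\phi$ pointwise and hence $u=(I\circ S)(u)\le(I\circ S)(\phi)$ by monotonicity. It is the \emph{least} upper bound, because any $g\in\mathcal{NL}(X)$ with $u\le g$ for all $u\in A$ satisfies $\phi\le g$ pointwise, so $(I\circ S)(\phi)\le(I\circ S)(g)=g$. The one remaining point is that $(I\circ S)(\phi)$ is nearly finite: fixing some $u_{0}\in A$ and an upper bound $w\in\mathcal{NL}(X)$ of $A$, monotonicity yields $u_{0}\le(I\circ S)(\phi)\le w$, so $(I\circ S)(\phi)$ is finite on the intersection of the two open dense sets on which $u_{0}$ and $w$ are finite, and a finite intersection of open dense subsets of a topological space is again open and dense.

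For the infimum I would argue dually, the extra occurrence of $I$ compensating for the fact that a pointwise infimum of lower semi-continuous functions need not be lower semi-continuous. Set $h_{0}=(I\circ S\circ I)(\varphi)$; this is normal lower semi-continuous as a value of $I\circ S$. It is a lower bound of $B$: for $u\in B$ we have $\varphi\le u$ pointwise, hence $I(\varphi)\le I(u)=u$ by lower semi-continuity of $u$, and applying $I\circ S$ together with $u=(I\circ S)(u)$ gives $h_{0}\le u$. It is the \emph{greatest} lower bound: any $h\in\mathcal{NL}(X)$ with $h\le u$ for all $u\in B$ satisfies $h\le\varphi$ pointwise, so $h=I(h)\le I(\varphi)$ by lower semi-continuity of $h$, and then $h=(I\circ S)(h)\le(I\circ S)(I(\varphi))=h_{0}$. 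Near finiteness of $h_{0}$ follows exactly as before, now by sandwiching $w\le h_{0}\le u_{0}$ between a lower bound $w\in\mathcal{NL}(X)$ of $B$ and an element $u_{0}\in B$.

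I do not anticipate a genuine conceptual obstacle: once the Baire-operator toolkit is in place the argument is formal. The step I would be most careful about is the idempotency of $I\circ S$ and the identification of its range with the normal lower semi-continuous functions (modulo the nearly-finite clause), since this is exactly what forces the proposed suprema and infima to lie in $\mathcal{NL}(X)$; and one should not skip the nearly-finiteness verifications, elementary though they are, for that is where the boundedness hypotheses on $A$ and $B$ (and the tacit nonemptiness) are actually used.
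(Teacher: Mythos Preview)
Your argument is correct and self-contained; the Baire-operator calculus you invoke (monotonicity, $I(w)\le w\le S(w)$, $I$ fixing lower semi-continuous functions, idempotency of $I\circ S$ with range the normal lower semi-continuous functions) is exactly what is needed, and the nearly-finite checks via sandwiching are not overlooked.

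The paper, however, does not argue directly. It observes that $\mathcal{NL}(X)$ is order isomorphic to the space $\mathbb{H}_{nf}(X)$ of nearly finite Hausdorff continuous functions and then imports the corresponding result from \cite{Anguelov et al}, remarking only that a direct proof is available. Your route is the direct proof the paper alludes to but declines to write out. The trade-off is the expected one: the paper's proof is a two-line citation that presupposes the isomorphism with $\mathbb{H}_{nf}(X)$ and familiarity with the Hausdorff-continuous literature, whereas your argument is longer but entirely internal to the Baire-operator framework set up in (\ref{NLSDef})--(\ref{SDef}), requires no detour through interval-valued functions, and makes transparent why the particular combinations $I\circ S$ and $I\circ S\circ I$ appear in the formulas.
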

\begin{proof}
One may prove the result directly.  However, it is straight
forward to show that $\mathcal{NL}\left(X\right)$ is order
isomorphic to the set $\mathbb{H}_{nf}\left(\Omega\right)$ of
nearly finite Hausdorff continuous functions \cite{Anguelov et al
1}. The result follows immediately from the respective result in
\cite{Anguelov et al}.
\end{proof}\\ \\
Applying similar arguments, we obtain the following useful result.
\begin{proposition}\label{ConEqDense}
Consider any $u\in\mathcal{NL}\left(X\right)$.  Then there is a
set $U\subseteq X$ such that $X\setminus U$ is of First Baire
Category and $u\in\mathcal{C}\left(X\setminus U\right)$.  What is
more, if $v\in\mathcal{NL}\left(X\right)$ and $D\subseteq X$ is
dense in $X$, then
\begin{eqnarray}
\left(\begin{array}{ll}
\forall & x\in D\mbox{ :} \\
& u\left(x\right)\leq v\left(x\right) \\
\end{array}\right)\Rightarrow u\leq v \nonumber
\end{eqnarray}
\end{proposition}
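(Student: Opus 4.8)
The plan is to establish the two claims separately, using the Baire operators $I$ and $S$ as the main technical tool. The first claim — that every $u \in \mathcal{NL}(X)$ is continuous off a set of first Baire category — is essentially a regularity statement about normal lower semi-continuous functions. First I would recall that for any extended real valued function $u$, the function $I(u)$ is lower semi-continuous and $S(u)$ is upper semi-continuous, and that $I(u) \le u \le S(u)$ pointwise. For a normal lower semi-continuous $u$ we have $(I \circ S)(u) = u$, so in particular $u = I(S(u))$ is lower semi-continuous; a symmetric argument using the near finiteness shows that $u$ agrees with an upper semi-continuous function off a set which is nowhere dense, or more precisely that the oscillation of $u$ is controlled. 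The cleanest route is to invoke the order isomorphism between $\mathcal{NL}(X)$ and $\mathbb{H}_{nf}(X)$ already used in the proof of Theorem \ref{NLDedComp}: a Hausdorff continuous function is, by definition, a function whose lower and upper semi-continuous envelopes coincide, and it is a classical fact \cite{Anguelov et al} that such a function is continuous at every point outside a set of first Baire category — indeed the set of discontinuity points of the associated interval-valued map is an $F_\sigma$ set of nowhere dense sets. Pulling this back along the isomorphism gives the set $U$.

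For the second claim, suppose $u, v \in \mathcal{NL}(X)$ with $u(x) \le v(x)$ for all $x$ in a dense set $D$. The strategy is to show $u(x) \le v(x)$ for all $x \in X$ by using the density of $D$ together with the semi-continuity properties. Fix $x_0 \in X$. Since $v = (I \circ S)(v)$ and $I$ is a supremum over neighbourhoods of infima, while $u$ is lower semi-continuous, I would argue as follows: for any neighbourhood $V$ of $x_0$, the set $V \cap D$ is nonempty (density), and on it $u \le v$; taking infima over $y \in V$ and then suprema over $V \in \mathcal{V}_{x_0}$ one compares $I(u)(x_0)$ with $I(v)(x_0)$. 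Since $u$ is lower semi-continuous one has $I(u)(x_0) = u(x_0)$; it remains to relate $I(v)(x_0)$ to $v(x_0) = (I \circ S)(v)(x_0)$. The key inequality is $I(v) \le I(S(v)) = v$ — no, one needs the reverse for this direction — so instead I would run the argument on $S$: since $u$ is normal lower semi-continuous, $u = I(S(u)) \le S(u)$, and on $D$ we have $S(u) \le S(v)$ wherever the pointwise bound propagates through the supremum-of-neighbourhoods. Concretely, for $x_0 \in X$ and $V \in \mathcal{V}_{x_0}$, $\sup\{u(y) : y \in V\} = \sup\{u(y) : y \in V \cap D\}$ provided $u$ has no "spike" on $V \setminus D$, which is guaranteed because $V \cap D$ is dense in $V$ and $u$ restricted to the complement of its first-category discontinuity set is continuous — here the first part of the proposition is used. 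This yields $S(u)(x_0) \le S(v)(x_0)$, and applying $I$ gives $u(x_0) = (I \circ S)(u)(x_0) \le (I \circ S)(v)(x_0) = v(x_0)$.

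The main obstacle will be the second claim: one must rule out the possibility that $u$ takes values on the "small" set $X \setminus D$ that are larger than anything predicted by its values on $D$. A lower semi-continuous function can be large at isolated points, so density of $D$ alone is not enough — it is precisely the \emph{normal} lower semi-continuity (the idempotency $I \circ S \circ u = u$) that forbids such spikes, and the careful bookkeeping is in showing that $S(u)$, evaluated at any point, only "sees" the values of $u$ on the dense set. I would handle this by combining the first part of the proposition (continuity of $u$ off a first-category set $U$) with the observation that $D \setminus U$ is still dense — a dense set minus a first-category set is dense in a Baire space, and in any case $D$ being dense and $U$ having dense complement forces $D \cap (X \setminus U)$ to be dense — so on the large open-dense set where $u$ is continuous, the hypothesis $u \le v$ extends from $D$ to the whole of that set by continuity of $u$ and lower semi-continuity of $v$, and then the normal lower semi-continuity of both $u$ and $v$ propagates the inequality to all of $X$ via the representation $u = (I\circ S)(u)$, $v = (I \circ S)(v)$ and the monotonicity of $I$ and $S$. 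As with Theorem \ref{NLDedComp}, the quickest rigorous path is to transport everything to $\mathbb{H}_{nf}(X)$, where the analogous comparison result for Hausdorff continuous functions is established in \cite{Anguelov et al}, and cite it; I would present that as the proof and relegate the direct argument above to a remark.
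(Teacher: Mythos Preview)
Your proposal is correct and arrives at the same proof the paper gives: both reduce the statement to the corresponding facts about nearly finite Hausdorff continuous functions via the order isomorphism $I:\mathbb{H}_{nf}(X)\to\mathcal{NL}(X)$, citing \cite{Anguelov et al}. The paper's proof is literally just that reduction, so your final choice matches it exactly; your exploratory direct argument is extra (and the step ``$D\cap(X\setminus U)$ is dense'' would need $X$ to be a Baire space, which you would want to flag if you kept that remark).
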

\begin{proof}
Again a direct proof of this proposition is available. However the
result follows easily by considering the order isomorphism
\begin{eqnarray}
I:\mathbb{H}_{nf}\left(X\right)\rightarrow
\mathcal{NL}\left(X\right)\nonumber
\end{eqnarray}
\end{proof}\\ \\
\begin{proposition}\label{HnfFullyDistOrderSep}
The space $\mathcal{NL}\left(X\right)$ is fully distributive.
\end{proposition}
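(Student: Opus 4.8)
The plan is to exploit the order isomorphism $I:\mathbb{H}_{nf}(X)\to\mathcal{NL}(X)$ once more, reducing the claim to the known fact that the space of nearly finite Hausdorff continuous functions is fully distributive (as established in \cite{Anguelov et al}, \cite{Anguelov et al 1}). Since full distributivity is a purely order-theoretic property — namely, that for every family $\{A_i : i\in I\}$ of subsets of the lattice, each bounded in the relevant direction, one has $\sup_i \inf A_i = \inf\{\sup_i a_i : a_i\in A_i\}$ whenever the indexing set on the right ranges over choice functions, and the dual identity — it is preserved by any order isomorphism. Thus the statement transfers verbatim.

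Alternatively, one can argue directly using Theorem \ref{NLDedComp}. First I would note that $\mathcal{NL}(X)$ is Dedekind complete, so all the suprema and infima in question exist. The key step is then the explicit formula for suprema and infima in terms of the Baire operators: $\sup A = (I\circ S)(\phi)$, where $\phi$ is the pointwise supremum. Since $I$ and $S$ are both monotone, and since the pointwise lattice operations on $\overline{\mathbb{R}}^X$ are fully distributive (this is just the full distributivity of $\overline{\mathbb{R}}$ applied pointwise), one checks that applying $I\circ S$ commutes appropriately with the relevant pointwise identities. Concretely, to verify a distributive law of the form $\sup_i \inf_j u_{ij} = \inf_{f} \sup_i u_{i f(i)}$, one writes both sides via their pointwise counterparts, invokes the corresponding pointwise identity in $\overline{\mathbb{R}}^X$, and then uses Proposition \ref{ConEqDense} — that two normal lower semi-continuous functions agreeing on a dense set are equal — together with the fact that $I\circ S$ restores normal lower semi-continuity without changing values on a residual set.

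The main obstacle is bookkeeping rather than conceptual: the composite operator $I\circ S$ is not itself a lattice homomorphism on all of $\overline{\mathbb{R}}^X$, so one cannot naively push it through the pointwise distributive identity. The resolution is that $I\circ S$ acts as the identity on $\mathcal{NL}(X)$ and, more importantly, that it does not alter the value of a function on the dense open set where the pointwise operations already produce a normal lower semi-continuous result; Proposition \ref{ConEqDense} then upgrades pointwise equality on that dense set to genuine equality in $\mathcal{NL}(X)$. Given this, I expect the cleanest exposition is in fact the first one — invoke the order isomorphism with $\mathbb{H}_{nf}(X)$ and cite the corresponding property there — with the direct argument sketched only as a remark.
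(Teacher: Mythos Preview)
Your primary approach---transferring the property along the order isomorphism $I:\mathbb{H}_{nf}(X)\to\mathcal{NL}(X)$---is valid and matches exactly the style the paper itself uses for Theorem~\ref{NLDedComp} and Proposition~\ref{ConEqDense}. The paper, however, does \emph{not} take that route here: it gives a short direct argument by contradiction. Assuming $\sup A=u_{0}$ but $\sup\{u\wedge v:u\in A\}\leq w<u_{0}\wedge v$ for some $w$, one has $u_{0},v\geq w$, so there is $u\in A$ with $u\not\leq w$; Proposition~\ref{ConEqDense} then yields a nonempty open $V$ on which $w(x)<u(x)$, and the Baire-operator description of infima from Theorem~\ref{NLDedComp} forces $(u\wedge v)(x)>w(x)$ on $V$, contradicting $u\wedge v\leq w$. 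Your route is cleaner provided the cited references genuinely supply full distributivity of $\mathbb{H}_{nf}(X)$; the paper's direct argument is self-contained and avoids that dependence.

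One caution on terminology: in this paper ``fully distributive'' means the infinite distributive law $(\sup A)\wedge v=\sup\{u\wedge v:u\in A\}$, not the complete-distributivity identity with choice functions that you wrote down. Your transfer argument is insensitive to which version is meant, since both are invariant under order isomorphism. But your alternative direct sketch via pointwise identities in $\overline{\mathbb{R}}^{X}$ is aimed at the stronger statement, which is a genuinely harder claim and may well fail for $\mathcal{NL}(X)$ in general; if you pursue the direct line, reformulate it for the simpler join-infinite distributive law that is actually proved and used downstream (Corollary~\ref{DistLat}, Theorem~\ref{UOCStructure}).
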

\begin{proof}
Consider a set $A\subset \mathcal{NL}\left(X\right)$ such that
\begin{eqnarray}
\sup A=u_{0}\nonumber
\end{eqnarray}
For $v\in\mathcal{NL}\left(X\right)$ we must show
\begin{eqnarray}
u_{0}\wedge v=\sup\{u\wedge v\mbox{ : }u\in A\}\label{Dist}
\end{eqnarray}
Suppose that (\ref{Dist}) fails for some $A\subset
\mathcal{NL}\left(X\right)$ and some $v\in
\mathcal{NL}\left(X\right)$.  That is,
\begin{eqnarray}
\begin{array}{ll}
\exists & w\in\mathcal{NL}\left(X\right)\mbox{ :} \\
& u\in A\Rightarrow u\wedge v\leq w< u_{0}\wedge v \\
\end{array}\label{NotDist}
\end{eqnarray}
Clearly, $u_{0},v\geq w$ so that there is some $u\in A$ such that
$w$ is not larger than $u$.  In view of Proposition
\ref{ConEqDense}
\begin{eqnarray}
\begin{array}{ll}
\exists & V\subseteq X\mbox{ nonempty, open :} \\
& x\in V\Rightarrow w\left(x\right)< u\left(x\right) \\
\end{array}\label{DistClaim}
\end{eqnarray}
Upon application of Proposition \ref{NLDedComp} we find
\begin{eqnarray}
\left(v\wedge u\right)\left(x\right)> u\left(x\right)\mbox{, }x\in
V\nonumber
\end{eqnarray}
since the operators $I$ and $S$ are monotone and idempotent
\cite[Section 2]{Anguelov}.  Hence (\ref{NotDist}) cannot hold.
This completes the proof.
\end{proof}\\ \\

The set $\mathcal{C}_{nd}\left(X\right)$ of all functions
$u:X\rightarrow \mathbb{R}$ that are continuous everywhere except
on some closed nowhere dense subset of $X$, that is,
\begin{eqnarray}
u\in\mathcal{C}_{nd}\left(X\right)\Leftrightarrow\left(\begin{array}{ll}
  \exists & \Gamma_{u}\subset X\mbox{ closed nowhere dense
:} \\
   & u\in\mathcal{C}\left(X\setminus\Gamma_{u}\right) \\
\end{array}\right)\nonumber
\end{eqnarray}
plays a fundamental role in the theory of Order Completion
\cite{Obergugenberger and Rosinger}, as discussed in the
introduction.  In particular, one considers the quotient space
$\mathcal{M}\left(X\right)=\mathcal{C}_{nd}\left(X\right)/\sim$,
where the equivalence relation $\sim$ on
$\mathcal{C}_{nd}\left(X\right)$ is defined by
\begin{eqnarray}
u\sim v\Leftrightarrow\left(\begin{array}{ll}
  \exists & \Gamma\subset X\mbox{ closed
nowhere dense :} \\
   & x\in X\setminus\Gamma\Rightarrow u\left(x\right)=v\left(x\right)
\end{array}\right)\label{CNDEq}
\end{eqnarray}
An order isomorphic representation of the space
$\mathcal{M}\left(X\right)$, consisting of normal lower
semi-continuous functions, is obtained by considering the set
\begin{eqnarray}
\mathcal{ML}\left(X\right)=\left\{u\in\mathcal{NL}\left(X\right)\mbox{
}\begin{array}{|ll}
\exists & \Gamma\subset X\mbox{ closed nowhere dense :} \\
& u\in\mathcal{C}\left(X\setminus\Gamma\right) \\
\end{array}\right\}
\end{eqnarray}
The advantage of considering the space
$\mathcal{ML}\left(X\right)$ in stead of
$\mathcal{M}\left(X\right)$ is that the elements of
$\mathcal{ML}\left(X\right)$ are actual point valued functions on
$X$, as apposed to the elements of $\mathcal{M}\left(X\right)$
which are equivalence classes of functions.  Hence the value
$u\left(x\right)$ of $u\in\mathcal{ML}\left(X\right)$ is
completely determined for every $x\in X$.
\begin{proposition}\label{OrderIsomorphism}
The mapping
\begin{eqnarray}
I_{S}:\mathcal{M}\left(X\right)\ni U\mapsto \left(I\circ
S\right)\left(u\right)\in \mathcal{ML}\left(X\right) \label{ISDef}
\end{eqnarray}
is a well defined order isomorphism.
\end{proposition}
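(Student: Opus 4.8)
The plan is to establish, in order, that $I_{S}$ is well defined, injective, surjective, and simultaneously order preserving and order reflecting; the workhorse throughout is Proposition \ref{ConEqDense}. Since $I_{S}$ is a priori defined on an equivalence class $U\in\mathcal{M}\left(X\right)$ through a representative $u\in\mathcal{C}_{nd}\left(X\right)$, the first task is genuinely a well-definedness check, and it rests on the following elementary fact, which I would isolate first: \emph{if $u\in\mathcal{C}_{nd}\left(X\right)$ is continuous on the open dense set $X\setminus\Gamma_{u}$, with $\Gamma_{u}$ closed and nowhere dense, then $\left(I\circ S\right)\left(u\right)$ agrees with $u$ on $X\setminus\Gamma_{u}$.} Indeed, for $x\in X\setminus\Gamma_{u}$ the neighbourhoods of $x$ lying inside $X\setminus\Gamma_{u}$ form a base at $x$, so the continuity of $u$ at $x$ yields $S\left(u\right)\left(x\right)=u\left(x\right)$ from (\ref{SDef}) and then $I\left(S\left(u\right)\right)\left(x\right)=u\left(x\right)$ from (\ref{IDef}). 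Since $u$ is real valued on $X\setminus\Gamma_{u}$, this shows $\left(I\circ S\right)\left(u\right)$ is nearly finite; it is moreover normal lower semi-continuous, because $I\circ S$ is idempotent by the standard calculus of the Baire operators \cite{Anguelov}, and it is continuous on $X\setminus\Gamma_{u}$, so it lies in $\mathcal{ML}\left(X\right)$. Consequently, if $u\sim v$, then $u=v$ off a closed nowhere dense set, so $\left(I\circ S\right)\left(u\right)$ and $\left(I\circ S\right)\left(v\right)$ agree on the dense set obtained by deleting the union of the three relevant nowhere dense sets; as both sides lie in $\mathcal{NL}\left(X\right)$, Proposition \ref{ConEqDense} gives $\left(I\circ S\right)\left(u\right)=\left(I\circ S\right)\left(v\right)$ everywhere, and $I_{S}$ is well defined.

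Injectivity and surjectivity then follow quickly. If $\left(I\circ S\right)\left(u\right)=\left(I\circ S\right)\left(v\right)$, restricting to $X\setminus\left(\Gamma_{u}\cup\Gamma_{v}\right)$ gives $u=v$ off a closed nowhere dense set, i.e. $u\sim v$, so $I_{S}$ is injective. For surjectivity, take $w\in\mathcal{ML}\left(X\right)$, fix a closed nowhere dense $\Gamma$ with $w\in\mathcal{C}\left(X\setminus\Gamma\right)$, and let $u$ be the function equal to $w$ on $X\setminus\Gamma$ and to $0$ on $\Gamma$, so that $u\in\mathcal{C}_{nd}\left(X\right)$. Then $\left(I\circ S\right)\left(u\right)$ and $w$ are both normal lower semi-continuous and both agree with $u$ on the dense set $X\setminus\Gamma$, whence $\left(I\circ S\right)\left(u\right)=w$ by Proposition \ref{ConEqDense}; that is, $I_{S}\left(\left[u\right]\right)=w$.

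Finally I would treat the order, using the order on $\mathcal{M}\left(X\right)$ induced by the pointwise order on $\mathcal{C}_{nd}\left(X\right)$, namely $\left[u\right]\leq\left[v\right]$ iff $u\leq v$ off some closed nowhere dense set (see \cite{Obergugenberger and Rosinger}). If $\left[u\right]\leq\left[v\right]$, then $\left(I\circ S\right)\left(u\right)\leq\left(I\circ S\right)\left(v\right)$ on a dense set, hence on all of $X$ by Proposition \ref{ConEqDense}; conversely, if $\left(I\circ S\right)\left(u\right)\leq\left(I\circ S\right)\left(v\right)$, then restriction to $X\setminus\left(\Gamma_{u}\cup\Gamma_{v}\right)$ gives $u\leq v$ off a closed nowhere dense set, i.e. $\left[u\right]\leq\left[v\right]$. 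Thus $I_{S}$ is an order isomorphism. The only step requiring real care is the first one — identifying $\left(I\circ S\right)\left(u\right)$ with $u$ on the set of continuity of $u$ and observing that it is normal lower semi-continuous — after which Proposition \ref{ConEqDense} makes everything routine; alternatively, in the spirit of the preceding proofs, one could instead deduce the statement from the known order isomorphism linking $\mathcal{M}\left(X\right)$, the space $\mathbb{H}_{nf}\left(X\right)$ of nearly finite Hausdorff continuous functions, and $\mathcal{NL}\left(X\right)$.
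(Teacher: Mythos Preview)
Your proof is correct and follows a somewhat different, arguably cleaner, organisation than the paper's. The paper establishes well-definedness by a direct computation with the Baire operators: it first observes that $\left(I\circ S\right)\left(u\right)$ and $\left(I\circ S\right)\left(v\right)$ agree on $X\setminus\Gamma$, and then, rather than invoking Proposition \ref{ConEqDense}, it reproves the density argument in place, chasing inequalities through $I$ and $S$ and appealing to their monotonicity and idempotency to conclude $\left(I\circ S\right)\left(u\right)=\left(I\circ S\right)\left(v\right)$. Injectivity is handled by producing an open set on which the images differ by a fixed $\epsilon$, and surjectivity is declared obvious. By contrast, you isolate once and for all the identification $\left(I\circ S\right)\left(u\right)=u$ on the open set of continuity, and then route every remaining step---well-definedness, injectivity, surjectivity, and both directions of the order statement---through Proposition \ref{ConEqDense}. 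This buys you a uniform argument and an explicit verification that the image really lands in $\mathcal{ML}\left(X\right)$, which the paper leaves implicit; the paper's approach is more self-contained in that it does not lean on Proposition \ref{ConEqDense}. One small technical point in your surjectivity step: the function $w\in\mathcal{ML}\left(X\right)$ may take the values $\pm\infty$ on its singular set, so to guarantee that the representative $u$ you build is genuinely real valued (hence in $\mathcal{C}_{nd}\left(X\right)$), you should enlarge $\Gamma$ to include the closed nowhere dense set $\{x:w\left(x\right)\notin\mathbb{R}\}$ before defining $u$.
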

\begin{proof}
First we show that the mapping $I_{S}$ is well defined.  In this
regard, consider some $U\in\mathcal{M}\left(X\right)$ and any
$u,v\in U$.  Let $\Gamma\subset X$ be the closed nowhere dense set
associated with $u$ and $v$ through (\ref{CNDEq}).  Since $\Gamma$
is closed, it follows by (\ref{IDef}) and (\ref{SDef}) that
\begin{eqnarray}
\left(I\circ S\right)\left(u\right)\left(x\right) = \left(I\circ
S\right)\left(v\right)\left(x\right)\mbox{, }x\in X\setminus
\Gamma\label{Equal}
\end{eqnarray}
Since $X\setminus \Gamma$ is dense in $X$, it follows that
\begin{eqnarray}
\begin{array}{ll}
\forall & x\in X\mbox{ :} \\
\forall & V_{1},V_{2}\in\mathcal{V}_{x}\mbox{ :} \\
\exists & x_{0}\in X\mbox{ :} \\
& x_{0}\in \left(X\setminus\Gamma\right)\cap\left(V_{1}\cap V_{2}\right) \\
\end{array}\nonumber
\end{eqnarray}
For any $x\in X$ we have
\begin{eqnarray}
\inf\left\{\left(I\circ S\right)\left(u\right)\left(y\right)\mbox{
: }y\in V_{1}\right\}\leq \left(I\circ
S\right)\left(u\right)\left(x_{0}\right)\nonumber
\end{eqnarray}
and
\begin{eqnarray}
\left(I\circ S\right)\left(v\right)\left(x_{0}\right) \leq
\sup\left\{\left(I\circ S\right)\left(v\right)\left(y\right)\mbox{
: }y\in V_{2}\right\}\nonumber
\end{eqnarray}
Hence it follows by (\ref{Equal}) that
\begin{eqnarray}
\inf\left\{\left(I\circ S\right)\left(u\right)\left(y\right)\mbox{
: }y\in V_{1}\right\}\leq \sup\left\{\left(I\circ
S\right)\left(v\right)\left(y\right)\mbox{ : }y\in
V_{2}\right\}\nonumber
\end{eqnarray}
so that (\ref{IDef}) and (\ref{SDef}) yields
\begin{eqnarray}
I\left(\left(I\circ S\right)\left(u\right)\right)\leq
S\left(\left(I\circ S\right)\left(v\right)\right)\label{Ineq}
\end{eqnarray}
It now follows form the idempotency and monotonicity of the
operator $I$ \cite[Section 2]{Anguelov} that
\begin{eqnarray}
\left(I\circ S\right)\left(u\right)\leq \left(I\circ
S\right)\left( \left(I\circ S\right)\left(v\right)\right)\nonumber
\end{eqnarray}
Since the operator $\left(I\circ S\right)$ is also idempotent, see
\cite[Section ]{Anguelov and Rosinger 1}, one obtains
\begin{eqnarray}
\left(I\circ S\right)\left(u\right)\leq \left(I\circ
S\right)\left(v\right)\nonumber
\end{eqnarray}
By similar arguments it follows that
\begin{eqnarray}
\left(I\circ S\right)\left(v\right)\leq \left(I\circ
S\right)\left(u\right)\nonumber
\end{eqnarray}
so that $\left(I\circ S\right)\left(u\right)=\left(I\circ
S\right)\left(v\right)$.\\
It is obvious that the mapping $I_{S}$ is surjective.  To see that
it is injective, consider any $U,V\in\mathcal{M}\left(X\right)$.
Then we may assume that
\begin{eqnarray}
\begin{array}{ll}
\exists & A\subseteq X\mbox{ nonempty, open :} \\
\exists & \epsilon>0\mbox{ :} \\
\forall & u\in U\mbox{, }v\in V\mbox{ :} \\
& \begin{array}{ll}
1) & x\in A\Rightarrow u\left(x\right)< v\left(x\right)-\epsilon \\
2) & u,v\in\mathcal{C}\left(A\right) \\
\end{array} \\
\end{array}
\end{eqnarray}
so that
\begin{eqnarray}
I_{S}\left(U\right)\left(x\right)<
I_{S}\left(V\right)\left(x\right)-\epsilon\mbox{, }x\in A\nonumber
\end{eqnarray}
It remains to verify
\begin{eqnarray}
\begin{array}{ll}
  \forall & U,V\in\mathcal{M}\left( X\right)\mbox{ :} \\
    & U\leq V\Leftrightarrow I_{S}\left(U\right)\leq
I_{S}\left(V\right) \\
\end{array}\nonumber
\end{eqnarray}
The implication `$U\leq V\Rightarrow I_{S}\left(U\right)\leq
I_{S}\left(V\right)$' follows by similar arguments as those
employed to show that $I_{S}$ is well defined.  Conversely,
suppose that $I_{S}U\leq I_{S}V$ for some
$U,V\in\mathcal{M}\left(X\right)$. The result now follows in the
same way as the injectivity of $I_{S}$. This completes the proof.
\end{proof}\\
The following is now immediate.
\begin{corollary}\label{DistLat}
The space $\mathcal{ML}\left(X\right)$ is a fully distributive
lattice.
\end{corollary}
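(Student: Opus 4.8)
The plan is to read off Corollary \ref{DistLat} from the order isomorphism $I_{S}:\mathcal{M}(X)\to\mathcal{ML}(X)$ of Proposition \ref{OrderIsomorphism}. An order isomorphism carries the lattice operations to the lattice operations and preserves every supremum and infimum that exists, so it would suffice to invoke the fact, from the order completion theory of \cite{Obergugenberger and Rosinger}, that $\mathcal{M}(X)$ is a fully distributive lattice. To make the statement self-contained, however, I would in addition verify it directly for $\mathcal{ML}(X)$ viewed as a concrete subset of the Dedekind complete, fully distributive space $\mathcal{NL}(X)$ of Proposition \ref{HnfFullyDistOrderSep}, which comes down to two observations.

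First, $\mathcal{ML}(X)$ is a sublattice of $\mathcal{NL}(X)$. Given $u,v\in\mathcal{ML}(X)$ with associated closed nowhere dense sets $\Gamma_{u},\Gamma_{v}$, the set $\Gamma=\Gamma_{u}\cup\Gamma_{v}$ is again closed and nowhere dense, and on the open dense set $X\setminus\Gamma$ the pointwise functions $x\mapsto\max\{u(x),v(x)\}$ and $x\mapsto\min\{u(x),v(x)\}$ are continuous, hence locally normal lower semi-continuous, so that the Baire operators $I$ and $S$ act on them as the identity at each point of $X\setminus\Gamma$. Since by Theorem \ref{NLDedComp} the join and the meet in $\mathcal{NL}(X)$ are obtained by applying $I\circ S$, respectively $I\circ S\circ I$, to these pointwise functions, $u\vee v$ and $u\wedge v$ restrict to continuous functions on $X\setminus\Gamma$, whence $u\vee v,u\wedge v\in\mathcal{ML}(X)$. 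Second, suprema and infima of subsets of $\mathcal{ML}(X)$, when they exist in $\mathcal{ML}(X)$, coincide with those computed in $\mathcal{NL}(X)$; this is because $\mathcal{ML}(X)$ is order dense in $\mathcal{NL}(X)$, which in turn follows from Proposition \ref{ConEqDense}: a normal lower semi-continuous function is continuous off a set of first Baire category and is determined by its restriction to any dense set, so between any two distinct comparable elements of $\mathcal{NL}(X)$ there lies an element of $\mathcal{ML}(X)$.

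Granting these two points the corollary follows. Let $A\subseteq\mathcal{ML}(X)$ have supremum $u_{0}$ in $\mathcal{ML}(X)$ and let $v\in\mathcal{ML}(X)$. By the second point $u_{0}=\sup A$ in $\mathcal{NL}(X)$ as well, so Proposition \ref{HnfFullyDistOrderSep} gives $u_{0}\wedge v=\sup\{u\wedge v:u\in A\}$ with join and meet taken in $\mathcal{NL}(X)$; by the first point every term here lies in $\mathcal{ML}(X)$, and by the second point the right-hand supremum, computed in $\mathcal{NL}(X)$, is also the supremum in $\mathcal{ML}(X)$, being equal to $u_{0}\wedge v\in\mathcal{ML}(X)$. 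The dual distributive law for infima over binary joins is symmetric, and together with the sublattice property this is exactly the assertion that $\mathcal{ML}(X)$ is a fully distributive lattice. The only genuinely non-routine ingredient is the order density of $\mathcal{ML}(X)$ in $\mathcal{NL}(X)$ — it is what prevents a supremum taken in $\mathcal{ML}(X)$ from differing from the ambient one — and this is precisely the content of Proposition \ref{ConEqDense}, equivalently it is already built into the statement, via Proposition \ref{OrderIsomorphism}, that $\mathcal{ML}(X)$ is the order-dense image of $\mathcal{M}(X)$.
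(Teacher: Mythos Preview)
Your opening paragraph is exactly the paper's argument: the corollary is stated as ``immediate'' right after Proposition~\ref{OrderIsomorphism}, the intended reasoning being that $\mathcal{M}(X)$ is already known to be a fully distributive lattice (it is an Archimedean vector lattice by \cite{Obergugenberger and Rosinger}, and every Riesz space is infinitely distributive), and this property transfers along the order isomorphism $I_{S}$. So on that count your proposal is correct and aligned with the paper.

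The supplementary ``self-contained'' route through the inclusion $\mathcal{ML}(X)\subseteq\mathcal{NL}(X)$ and Proposition~\ref{HnfFullyDistOrderSep} is a genuinely different argument, and it has one soft spot. Your first observation (sublattice) is fine. Your second observation---that a supremum computed in $\mathcal{ML}(X)$ coincides with the one computed in $\mathcal{NL}(X)$---you deduce from order density of $\mathcal{ML}(X)$ in $\mathcal{NL}(X)$, which you then attribute to Proposition~\ref{ConEqDense}. That attribution is a leap. Proposition~\ref{ConEqDense} says a function in $\mathcal{NL}(X)$ is continuous off a \emph{first Baire category} set and is determined on dense sets; but continuity off a first-category set is strictly weaker than continuity off a \emph{closed nowhere dense} set, so nothing in that proposition hands you an element of $\mathcal{ML}(X)$ strictly between two given comparable elements of $\mathcal{NL}(X)$. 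The order density you invoke is true, but establishing it requires an honest construction---essentially the Hahn-insertion argument carried out later in Theorem~\ref{Completion}, where it is shown that $\mathcal{NL}(X)$ is the (Dedekind) completion of $\mathcal{ML}(X)$. If you want your direct route to stand on its own at this point in the paper, you must either import that construction here or give some other reason why $\sup_{\mathcal{ML}(X)}A$ cannot strictly overshoot $\sup_{\mathcal{NL}(X)}A$; otherwise fall back on the order-isomorphism argument, which is what the paper does.
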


\section{The Uniform Order Convergence Structure on $\mathcal{ML}\left(X\right)$}

As a consequence of Proposition \ref{HnfFullyDistOrderSep} one may
define the order convergence structure $\lambda_{o}$ on the space
$\mathcal{ML}\left(X\right)$. The order convergence structure
induces the order convergence of sequences on
$\mathcal{ML}\left(X\right)$ and is Hausdorff, regular and first
countable.  In order to define a uniform convergence structure, in
the sense of \cite{Beattie and Butzmann}, we introduce the
following notation.  For any open subset $U$ of $X$, and any
subset $F$ of $\mathcal{ML}\left(X\right)$, we denote by $F_{|U}$
the restriction of $F$ to $U$.  That is,
\begin{eqnarray}
F_{|U}=\left\{v\in\mathcal{ML}\left(U\right)\mbox{ $|$
}\begin{array}{ll}
\exists & w\in F\mbox{ :} \\
& x\in U\Rightarrow w\left(x\right)=v\left(x\right) \\
\end{array}\right\}\nonumber
\end{eqnarray}
\begin{definition}\label{UOCSDef}
Let $\tau$ be the topology on $X$, and let $\Sigma$ consist of all
nonempty order intervals in $\mathcal{ML}\left(X\right)$.  Let
$\mathcal{J}_{o}$ denote the family of filters on
$\mathcal{ML}\left(X\right)\times \mathcal{ML}\left(X\right)$ that
satisfy the following:  There exists $k\in\mathbb{N}$ such that
\begin{eqnarray}
\begin{array}{ll}
    \forall & i=1,...,k\mbox{ :} \\
    \exists & \Sigma_{i}=\left(I_{n}^{i}\right)\subseteq\Sigma\mbox{ :} \\
      & \begin{array}{ll}
        1) & I_{n+1}^{i}\subseteq I_{n}^{i}\mbox{, }n\in\mathbb{N} \\
         2) & \left([\Sigma_{1}]\times [\Sigma_{1}]\right)\cap...
         \cap
         \left([\Sigma_{k}]\times [\Sigma_{k}]\right)
         \subseteq\mathcal{U}
      \end{array} \\
  \end{array}\label{DefCon1}
\end{eqnarray}
where $[\Sigma_{i}]=[\left\{F\mbox{ : }F\in\Sigma_{i}\right\}]$.
Moreover, for every $i=1,...,k$  and $V\in\tau$ one has
\begin{eqnarray}
\begin{array}{lll}
\begin{array}{ll}
  \exists & u_{i}\in\mathcal{ML}\left(X\right)\mbox{ :} \\
          & \cap_{n\in\mathbb{N}}I_{n}^{i|V}=\left\{u_{i}\right\}_{|V}
\end{array} & \mbox{ or } & \cap_{n\in\mathbb{N}}I_{n}^{i|V}=\emptyset \\
\end{array}\label{DefCon2}
\end{eqnarray}
\end{definition}
\begin{theorem}\label{UOCStructure}
The family $\mathcal{J}_{o}$ of filters on
$\mathcal{ML}\left(X\right)\times \mathcal{ML}\left(X\right)$
constitutes a uniform convergence structure.
\end{theorem}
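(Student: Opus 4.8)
To verify that $\mathcal{J}_{o}$ is a uniform convergence structure in the sense of \cite{Beattie and Butzmann}, I need to check the defining axioms: (UCS1) every principal ultrafilter $[\{(u,u)\}]$ lies in $\mathcal{J}_{o}$; (UCS2) $\mathcal{J}_{o}$ is closed under taking finer filters; (UCS3) $\mathcal{U},\mathcal{V}\in\mathcal{J}_{o}$ implies $\mathcal{U}\cap\mathcal{V}\in\mathcal{J}_{o}$; (UCS4) $\mathcal{U}\in\mathcal{J}_{o}$ implies $\mathcal{U}^{-1}\in\mathcal{J}_{o}$, where $\mathcal{U}^{-1}$ is the image under the flip map $(u,v)\mapsto(v,u)$; and (UCS5) if $\mathcal{U},\mathcal{V}\in\mathcal{J}_{o}$ and the composition $\mathcal{U}\circ\mathcal{V}$ exists, then $\mathcal{U}\circ\mathcal{V}\in\mathcal{J}_{o}$. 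Axioms (UCS2) and (UCS4) are essentially immediate from the shape of the definition, so the work concentrates on (UCS1), (UCS3), and (UCS5).

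\medskip

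\textbf{First steps.} For (UCS1), given $u\in\mathcal{ML}(X)$, take $k=1$ and the constant sequence $I_{n}^{1}=[u,u]=\{u\}$ for all $n$. Then condition 1) of (\ref{DefCon1}) is trivial, $([\Sigma_{1}]\times[\Sigma_{1}])=[\{(u,u)\}]$, and for condition (\ref{DefCon2}) we have $\bigcap_{n}I_{n}^{1|V}=\{u\}_{|V}$ for every $V\in\tau$, so the principal ultrafilter at $(u,u)$ belongs to $\mathcal{J}_{o}$; any filter finer than it is of the same form, giving (UCS2) simultaneously. For (UCS4): if $\mathcal{U}\in\mathcal{J}_{o}$ is witnessed by sequences $\Sigma_{1},\dots,\Sigma_{k}$, then since each generator $[\Sigma_{i}]\times[\Sigma_{i}]$ is symmetric under the flip, the same sequences witness $\mathcal{U}^{-1}\in\mathcal{J}_{o}$; condition (\ref{DefCon2}) is unaffected.

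\medskip

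\textbf{The intersection axiom.} For (UCS3), suppose $\mathcal{U}$ is witnessed by $\Sigma_{1},\dots,\Sigma_{k}$ and $\mathcal{V}$ by $\Sigma_{1}',\dots,\Sigma_{l}'$. I claim the concatenated list $\Sigma_{1},\dots,\Sigma_{k},\Sigma_{1}',\dots,\Sigma_{l}'$ witnesses $\mathcal{U}\cap\mathcal{V}\in\mathcal{J}_{o}$ with parameter $k+l$. Condition (\ref{DefCon2}) holds for each entry since it held in the original lists. For the inclusion in condition 2) of (\ref{DefCon1}), one uses the general fact that for filters, $[\mathcal{A}_{1}]\cap\cdots\cap[\mathcal{A}_{k+l}]$ refines both $[\mathcal{A}_{1}]\cap\cdots\cap[\mathcal{A}_{k}]$ and $[\mathcal{A}_{k+1}]\cap\cdots\cap[\mathcal{A}_{k+l}]$, hence refines both $\mathcal{U}$ and $\mathcal{V}$, hence refines $\mathcal{U}\cap\mathcal{V}$. (Here $\mathcal{A}_{i}=[\Sigma_{i}]\times[\Sigma_{i}]$.) This is a routine manipulation of the lattice of filters.

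\medskip

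\textbf{The composition axiom — the main obstacle.} The real work is (UCS5). Suppose $\mathcal{U}\circ\mathcal{V}$ exists, with $\mathcal{U}$ witnessed by decreasing interval sequences $(I_{n}^{i})$, $i=1,\dots,k$, and $\mathcal{V}$ by $(J_{n}^{j})$, $j=1,\dots,l$. One needs to produce a finite list of decreasing interval sequences witnessing $\mathcal{U}\circ\mathcal{V}\in\mathcal{J}_{o}$. The natural candidates are the "composable pairs": for each $(i,j)$ such that the sequences $(I_{n}^{i})$ and $(J_{n}^{j})$ are compatible — meaning, roughly, that $\bigcap_{n}I_{n}^{i}$ and $\bigcap_{n}J_{n}^{j}$ are not forced apart so that $([\Sigma_{i}]\times[\Sigma_{i}])\circ([\Sigma_{j}']\times[\Sigma_{j}'])$ exists — the composed generator is controlled by a new decreasing sequence of intervals built from $I_{n}^{i}$ and $J_{n}^{j}$. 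The key structural input is that the intersection of two order intervals in $\mathcal{ML}(X)$, when nonempty, is again an order interval (this is where full distributivity, Corollary \ref{DistLat}, and Dedekind completeness, Theorem \ref{NLDedComp}, enter: $[\,a,b\,]\cap[\,c,d\,]=[\,a\vee c,\,b\wedge d\,]$). One shows that $([\Sigma_{i}]\times[\Sigma_{i}])\circ([\Sigma_{j}']\times[\Sigma_{j}'])$ refines $[\Theta_{ij}]\times[\Theta_{ij}]$ for a suitable decreasing interval sequence $\Theta_{ij}$, and that condition (\ref{DefCon2}) for $\Theta_{ij}$ follows by intersecting the two hypotheses localized to each $V\in\tau$ — here one must check that when $\bigcap_{n}I_{n}^{i|V}=\{u_{i}\}_{|V}$ and $\bigcap_{n}J_{n}^{j|V}=\{u_{j}'\}_{|V}$ with the composition existing, then necessarily $u_{i}$ and $u_{j}'$ agree on $V$ (using Proposition \ref{ConEqDense} to pass from agreement on a dense set), so the localized intersection is again a singleton restriction or empty. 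The delicate point is bookkeeping: ensuring the filter $\mathcal{U}\circ\mathcal{V}$ is actually refined by the intersection $\bigcap_{(i,j)}\,([\Theta_{ij}]\times[\Theta_{ij}])$ over the composable pairs, and that non-composable pairs contribute nothing. I expect this compatibility-and-refinement argument for the composition to be the hard part; the rest is formal.
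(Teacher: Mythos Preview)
Your overall plan matches the paper's: the first four axioms are immediate and the content is in (UCS5), handled by restricting to composable pairs $(i,j)$. The gap is in your construction of $\Theta_{ij}$. You highlight the formula $[a,b]\cap[c,d]=[a\vee c,b\wedge d]$ as the ``key structural input'', but intersections point in the wrong direction. When the composition $\bigl([\Sigma_i]\times[\Sigma_i]\bigr)\circ\bigl([\Sigma_j']\times[\Sigma_j']\bigr)$ exists, its basic sets are products $I_{n}^{i}\times J_{m}^{j}$ (since $(A\times A)\circ(B\times B)$ equals $A\times B$ or $B\times A$, depending on convention, whenever $A\cap B\neq\emptyset$). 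For this filter to refine $[\Theta_{ij}]\times[\Theta_{ij}]$, each $\Theta_{ij,n}\times\Theta_{ij,n}$ must \emph{contain} some $I_{n'}^{i}\times J_{m}^{j}$, forcing $\Theta_{ij,n}$ to contain both an $I_{n'}^{i}$ and a $J_{m}^{j}$. Taking $\Theta_{ij,n}=I_{n}^{i}\cap J_{n}^{j}$ gives a \emph{subset} of each, so the required inclusion fails. The paper instead takes the hull
\[
I_{n}^{i,j}=\bigl[\inf(I_{n}^{i})\wedge\inf(J_{n}^{j}),\ \sup(I_{n}^{i})\vee\sup(J_{n}^{j})\bigr],
\]
which contains both $I_{n}^{i}$ and $J_{n}^{j}$, so that $[\Sigma_i]\times[\Sigma_j']\supseteq[\Sigma_{i,j}]\times[\Sigma_{i,j}]$ is immediate.

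Full distributivity (Corollary \ref{DistLat}) enters not in forming intersections of intervals---that needs only the lattice structure---but in verifying (\ref{DefCon2}) for the hull sequence: it gives $\sup_{n}(\lambda_{n}^{i}\wedge\lambda_{n}^{\prime j})=(\sup_{n}\lambda_{n}^{i})\wedge(\sup_{n}\lambda_{n}^{\prime j})$ and the dual identity for infima. Your observation that composability forces $u_{i}$ and $u_{j}'$ to agree on each $V$ is correct and is exactly what then collapses $[u_{i}\wedge u_{j}',\,u_{i}\vee u_{j}']$ to a singleton; the paper compresses all of this into the single clause ``since $\mathcal{ML}(X)$ is fully distributive, (\ref{DefCon2}) also holds''.
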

\begin{proof}
The firs four axioms \cite[Definition 2.1.2]{Beattie and Butzmann}
are clearly fulfilled, so it remains to verify
\begin{eqnarray}
\begin{array}{ll}
\forall & \mathcal{U},\mathcal{V}\in\mathcal{J}_{o}\mbox{ :} \\
& \mathcal{U}\circ \mathcal{V}\mbox{ exists }\Rightarrow \mathcal{U}\circ \mathcal{V}\in\mathcal{J}_{o} \\
\end{array}\label{ClaimComp}
\end{eqnarray}
So take any $\mathcal{U},\mathcal{V}\in\mathcal{J}_{o}$ such that
$\mathcal{U}\circ \mathcal{V}$ exists, and let
$\Sigma_{1},...,\Sigma_{k}$ and $\Sigma_{1}',...,\Sigma_{l}'$ be
the collection of order intervals associated with $\mathcal{U}$
and $\mathcal{V}$, respectively, through Definition \ref{UOCSDef}.
Set
\begin{eqnarray}
\Phi=\{\left(i,j\right)\mbox{ : }[\Sigma_{i}]\circ
[\Sigma_{j}']\mbox{ exists}\}\nonumber
\end{eqnarray}
Then
\begin{eqnarray}
\mathcal{U}\circ\mathcal{V}\supseteq
\bigcap\{\left([\Sigma_{i}]\times [\Sigma_{i}]\right)\circ
\left([\Sigma_{j}]\times [\Sigma_{j}]\right)\mbox{ :
}\left(i,j\right)\in\Phi\}
\end{eqnarray}\label{Comp}
by \cite[Proposition 2.1.1 (i)]{Beattie and Butzmann}.  Now,
$\left(i,j\right)\in\Phi$ exists if and only if
\begin{eqnarray}
\begin{array}{ll}
\forall & m,n\in\mathbb{N}\mbox{ :} \\
& I_{m}^{i}\cap I_{n}^{j}\neq \emptyset \\
\end{array}\nonumber
\end{eqnarray}
For any $\left(i,j\right)\in \Phi$, set
$\Sigma_{i,j}=\left(I_{n}^{i,j}\right)$ where, for each
$n\in\mathbb{N}$
\begin{eqnarray}
I_{n}^{i,j}=[\inf\left(I_{n}^{i}\right)\wedge
\inf\left(I_{n}^{j}\right), \sup\left(I_{n}^{i}\right)\vee
\sup\left(I_{n}^{j}\right)]\nonumber
\end{eqnarray}
Now, using (\ref{Comp}), we find
\begin{eqnarray}
\mathcal{U}\circ \mathcal{V}\supseteq \bigcap\{[\Sigma_{i}]\times
[\Sigma_{j}]\mbox{ : }\left(i,j\right)\in\Phi\} \supseteq \bigcap
\{[\Sigma_{i,j}]\times [\Sigma_{i,j}]\mbox{ :
}\left(i,j\right)\in\Phi\}\nonumber
\end{eqnarray}
Clearly each $\Sigma_{i,j}$ satisfies 1) of (\ref{DefCon1}). Since
$\mathcal{ML}\left(X\right)$ is fully distributive, see Corollary
\ref{DistLat}, (\ref{DefCon2}) also holds.  This completes the
proof.
\end{proof}\\ \\
An important fact to note is that the uniform order convergence
structure $\mathcal{J}_{o}$ is defined solely in terms of the
order on $\mathcal{ML}\left(X\right)$, and the topology on $X$.
This is unusual for a uniform convergence structure on a function
space. Indeed, for a space of functions $F\left(X,Y\right)$,
defined on some set $X$, and taking values in $Y$, one defines the
uniform convergence structure either in terms of the uniform
convergence structure on $Y$, or in terms of a convergence
structure on $F\left(X,Y\right)$ which is suitably compatible with
the algebraic structure of the space. Indeed, a convergence vector
space carries a natural uniform convergence structure, where the
Cauchy filters are determined by the linear structure. That is,
\begin{equation}
\mathcal{F}\mbox{ a Cauchy filter}\Leftrightarrow
\mathcal{F}-\mathcal{F}\rightarrow 0
\end{equation}
This is also the case for the order convergence structure studied
in \cite{Anguelov and van der Walt} and \cite{van der Walt 1}. The
motivation for introducing a uniform convergence structure that
does not depend on the algebraic structure of the set
$\mathcal{ML}\left(X\right)$ comes from nonlinear PDEs, and in
particular the Order Completion Method \cite{Obergugenberger and
Rosinger}, as explained in the Introduction.

The convergence structure $\lambda_{\mathcal{J}_{o}}$ induced on
$\mathcal{ML}\left(X\right)$ by the uniform convergence structure
$\mathcal{J}_{o}$ may be characterized as follows.
\begin{theorem}\label{ConvChar}
A filter $\mathcal{F}$ on $\mathcal{ML}\left(X\right)$ belongs to
$\lambda_{\mathcal{J}_{o}}\left(u\right)$, for some
$u\in\mathcal{ML}\left(X\right)$, if and only if there exists a
family $\Sigma_{\mathcal{F}}=\left(I_{n}\right)$ of nonempty order
intervals on $\mathcal{ML}\left(X\right)$ such that
\begin{eqnarray}
\begin{array}{ll}
1) & I_{n+1}\subseteq I_{n}\mbox{, }n\in\mathbb{N} \\
2) & \begin{array}{ll}
\forall & V\in\tau\mbox{ :} \\
& \cap_{n\in\mathbb{N}}I_{n|V}=\{u\}_{|V} \\
\end{array} \\
\end{array}
\end{eqnarray}
and $[\Sigma_{\mathcal{F}}]\subseteq\mathcal{F}$.
\end{theorem}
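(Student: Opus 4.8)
The statement characterises membership in the induced convergence structure $\lambda_{\mathcal{J}_{o}}$, so the natural route is to unwind the definition of the convergence structure induced by a uniform convergence structure: a filter $\mathcal{F}$ converges to $u$ precisely when $\mathcal{F} \times \dot{u}$ (or equivalently $\mathcal{F} \times \mathcal{F}$ together with $\dot{u} \times \dot{u}$, depending on the precise convention in \cite[Definition 2.1.2]{Beattie and Butzmann}) belongs to $\mathcal{J}_{o}$. So the plan is to prove the two implications separately, in each case translating between "$\mathcal{F} \times \dot{u} \in \mathcal{J}_{o}$" and the existence of a decreasing sequence of order intervals $(I_n)$ collapsing to $\{u\}_{|V}$ on every open set $V$.

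For the \emph{if} direction, suppose such a family $\Sigma_{\mathcal{F}} = (I_n)$ is given with $[\Sigma_{\mathcal{F}}] \subseteq \mathcal{F}$ and $\cap_n I_{n|V} = \{u\}_{|V}$ for every $V \in \tau$. I would take $k=1$ and $\Sigma_1 = \Sigma_{\mathcal{F}}$ in Definition \ref{UOCSDef}: condition 1) of (\ref{DefCon1}) is exactly property 1) above, and the alternative in (\ref{DefCon2}) holds because $\cap_n I_{n|V} = \{u\}_{|V}$ is of the required singleton form (with $u_i = u$), so $[\Sigma_1] \times [\Sigma_1] \in \mathcal{J}_{o}$. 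Then I need $\mathcal{F} \times \dot{u} \supseteq [\Sigma_1] \times [\Sigma_1]$, which follows since $[\Sigma_{\mathcal{F}}] \subseteq \mathcal{F}$ and, because $u \in I_n$ for all $n$ (as $u \in \cap_n I_{n|V}$ for $V = X$ forces $u \in I_n$), we have $\dot{u} \supseteq [\Sigma_{\mathcal{F}}]$; hence $\mathcal{F} \times \dot{u} \in \mathcal{J}_{o}$, i.e. $\mathcal{F} \in \lambda_{\mathcal{J}_{o}}(u)$.

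For the \emph{only if} direction, suppose $\mathcal{F} \in \lambda_{\mathcal{J}_{o}}(u)$, so $\mathcal{F} \times \dot{u} \in \mathcal{J}_{o}$. Applying Definition \ref{UOCSDef} we obtain $k \in \mathbb{N}$ and families $\Sigma_1, \dots, \Sigma_k$ of decreasing order intervals with $\bigcap_{i=1}^{k}\big([\Sigma_i] \times [\Sigma_i]\big) \subseteq \mathcal{F} \times \dot{u}$ and satisfying (\ref{DefCon2}). The task is to collapse the finite intersection into a single sequence: I would set $I_n = \bigcap_{i=1}^{k} I_n^i$ when this is nonempty. The containment $\dot{u} \supseteq \bigcap_i [\Sigma_i]$ forces $u \in I_n^i$ for each $i$ and $n$ (for $n$ large enough, then for all $n$ by nesting), so in fact $u \in I_n$ and each $I_n$ is a nonempty order interval, clearly decreasing. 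To verify property 2), fix $V \in \tau$: since $u \in I_{n|V}$ always, $\{u\}_{|V} \subseteq \cap_n I_{n|V} \subseteq \cap_n I_{n|V}^i$ for each $i$; by (\ref{DefCon2}) each $\cap_n I_{n|V}^i$ is either $\emptyset$ or a singleton, and since it contains $u_{|V}$ it must equal $\{u\}_{|V}$; hence $\cap_n I_{n|V} = \{u\}_{|V}$. Finally $[\Sigma_{\mathcal{F}}] = [\{I_n : n \in \mathbb{N}\}] \subseteq [\Sigma_i] \subseteq \mathcal{F}$ (projecting the inclusion $\bigcap_i [\Sigma_i]\times[\Sigma_i] \subseteq \mathcal{F}\times\dot u$ onto the first factor), as required.

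**Main obstacle.** The delicate point is the argument that $u$ lies in every $I_n^i$, which is what makes the finite intersection $\bigcap_i I_n^i$ nonempty and lets us replace "$\emptyset$ or singleton" by "exactly $\{u\}_{|V}$". This hinges on correctly using the second factor $\dot{u}$ in $\mathcal{F} \times \dot{u}$: from $\bigcap_i [\Sigma_i] \subseteq \dot{u}$ one gets that some $I_n^i$ (hence, by nesting, every $I_m^i$ with $m \ge n$, and then all of them) contains $u$. One must be careful about whether the induced convergence structure uses $\mathcal{F} \times \dot{u}$, $\dot{u} \times \mathcal{F}$, or the symmetrised version, and about the possibility that the defining families for $\mathcal{F}\times\dot u$ need not individually be "centred at $u$" until one invokes this second-coordinate constraint; tracking this bookkeeping through the finite intersection is the part that needs care rather than ingenuity.
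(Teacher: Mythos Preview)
Your ``if'' direction is fine and matches the paper's ``the converse is trivial''. The ``only if'' direction has two genuine gaps, both in the step where you combine the families $\Sigma_1,\dots,\Sigma_k$ into a single $\Sigma_{\mathcal{F}}$.

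\textbf{First gap: $u$ need not lie in every $I_n^i$.} From $\bigcap_i([\Sigma_i]\times[\Sigma_i])\subseteq\mathcal{F}\times[u]$, projecting onto the second factor gives $\bigcap_i[\Sigma_i]\subseteq[u]$, i.e.\ every finite union $\bigcup_i I^i_{n_i}$ contains $u$. A pigeonhole argument shows that for \emph{some} indices $i$ one has $u\in I_n^i$ for all $n$, but it does \emph{not} show this for every $i$. (Take $k=2$ with $u\in I^1_n$ for all $n$ and $u\notin I^2_n$ for any $n$: every union $I^1_{n_1}\cup I^2_{n_2}$ still contains $u$.) The paper handles this by introducing $\Psi=\{i:[\Sigma_i]\subseteq[u]\}$ and proving separately that $\mathcal{F}\supseteq\bigcap_{i\in\Psi}[\Sigma_i]$; the indices outside $\Psi$ are discarded.

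\textbf{Second gap: intersections go the wrong way for the filter containment.} Even restricting to $i\in\Psi$, your choice $I_n=\bigcap_i I_n^i$ makes $[\Sigma_{\mathcal{F}}]$ \emph{finer} than each $[\Sigma_i]$, whereas what you actually know is that the \emph{coarser} filter $\bigcap_{i\in\Psi}[\Sigma_i]$ (generated by the unions $\bigcup_i I^i_{n_i}$) is contained in $\mathcal{F}$. There is no reason for $\bigcap_i I_n^i$ to belong to $\mathcal{F}$, so the final line ``$[\Sigma_{\mathcal{F}}]\subseteq[\Sigma_i]\subseteq\mathcal{F}$'' is false in both inclusions. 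The paper instead takes the \emph{enveloping} interval $I_n=[\inf_{i\in\Psi}\lambda_n^i,\sup_{i\in\Psi}\mu_n^i]$, which contains $\bigcup_{i\in\Psi}I_n^i$ and hence lies in $\bigcap_{i\in\Psi}[\Sigma_i]\subseteq\mathcal{F}$. The price is that one must then check that these larger intervals still collapse to $\{u\}$; this is where the paper invokes full distributivity of $\mathcal{ML}(X)$ (Corollary~\ref{DistLat}) together with Proposition~\ref{ConEqDense} to show $\sup_n\lambda_n^i=u=\inf_n\mu_n^i$ for each $i\in\Psi$.
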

\begin{proof}
Let the filter $\mathcal{F}$ converge to
$u\in\mathcal{ML}\left(X\right)$.  Then, by \cite[Definition
2.1.3]{Beattie and Butzmann},
$[u]\times\mathcal{F}\in\mathcal{J}_{o}$. Hence by Definition
\ref{UOCSDef} there exist $k\in\mathbb{N}$ and
$\Sigma_{i}\subseteq\Sigma$ for $i=1,...,k$ such that
(\ref{DefCon1}) through (\ref{DefCon2}) are satisfied.\\
Set $\Psi=\left\{i:[\Sigma_{i}]\subset[u]\right\}$. We claim
\begin{equation}\label{Claim}
\mathcal{F}\supset\bigcap_{i\in\Psi}\mathcal{I}^{i}
\end{equation}
Take a set $A\in\cap_{i\in\Psi}\mathcal{I}^{i}$.  Then for each
$i\in\Psi$ there is a set $A_{i}\in\mathcal{I}^{i}$ such that
$A\supset\cup_{i\in\Psi}A_{i}$.  For each
$i\in\left\{1,...,k\right\}\setminus\Psi$ choose a set
$A_{i}\in\mathcal{I}^{i}$ with $u\in
\mathcal{ML}\left(X\right)\setminus A_{i}$. Then
\begin{eqnarray}
\left(A_{1}\times A_{1}\right)\cup...\cup\left(A_{k}\times
A_{k}\right)\in\left(\mathcal{I}_{1}\times\mathcal{I}_{1}\right)\cap...\cap
\left(\mathcal{I}_{k}\times\mathcal{I}_{k}\right)\subset\mathcal{F}\times[u]\nonumber
\end{eqnarray}
and so there is a set $B\in\mathcal{F}$ such that
\begin{eqnarray}
B\times\left\{u\right\}\subset\left(A_{1}\times
A_{1}\right)\cup...\cup\left(A_{k}\times A_{k}\right)\nonumber
\end{eqnarray}
If $w\in B$ then $\left(u,w\right)\in A_{i}\times A_{i}$ for some
$i$.  Since $u\in A_{i}$, we get $i\in\Psi$ and so
$w\in\cup_{i\in\Psi}A_{i}$.  This gives
$B\subseteq\cup_{i\in\Psi}A_{i}\subseteq A$ and so
$A\in\mathcal{F}$ so that (\ref{Claim}) holds.\\
Clearly, for each $i\in\Psi$, we have
\begin{eqnarray}
\begin{array}{ll}
\forall & V\in\tau\mbox{ :} \\
& \cap_{n\in\mathbb{N}}I^{i}_{n|V}=\{u\}_{|V} \\
\end{array}\label{AAAA}
\end{eqnarray}
Writing each $I_{n}^{i}\in\Sigma_{i}$ in the form
$I_{n}^{i}=[\lambda_{n}^{i},\mu_{n}^{i}]$, we claim
\begin{eqnarray}
\sup\{\lambda_{n}^{i}\mbox{ : }n\in \mathbb{N}\}=u =
\inf\{\mu_{n}^{i}\mbox{ : }n\in \mathbb{N}\}
\end{eqnarray}
Suppose this were not the case.  Then there exists
$v,w\in\mathcal{ML}\left(X\right)$ such that
\begin{eqnarray}
\lambda_{n}\leq v < w \leq \mu_{n}\mbox{, }n\in\mathbb{N}\nonumber
\end{eqnarray}
Then, in view of Proposition \ref{ConEqDense}, there is some
nonempty $V\in\tau$ such that
\begin{eqnarray}
v\left(x\right)<w\left(x\right)\mbox{, }x\in V\nonumber
\end{eqnarray}
which contradicts (\ref{DefCon2}).  Since
$\mathcal{ML}\left(X\right)$ is fully distributive, the result
follows upon setting
\begin{eqnarray}
\Sigma_{\mathcal{F}}= \left\{[\lambda_{n},\mu_{n}]\mbox{ :
}\begin{array}{ll}
1) & \lambda_{n}=\inf\{\lambda_{n}^{i}\mbox{ : }i\in\Psi\} \\
2) & \mu_{n}=\sup\{\mu_{n}^{i}\mbox{ : }i\in\Psi\} \\
\end{array}\right\}
\end{eqnarray}
The converse is trivial.
\end{proof}\\ \\
The following is now immediate
\begin{corollary}
Consider a filter $\mathcal{F}$ on $\mathcal{ML}\left(X\right)$.
Then $\mathcal{F}\in\lambda_{\mathcal{J}_{o}}\left(u\right)$ if
and only if $\mathcal{F}\in\lambda_{o}\left(u\right)$.  Therefore
$\mathcal{ML}\left(X\right)$ is a uniformly Hausdorff uniform
convergence space.\\
In particular, a sequence $\left(u_{n}\right)$
on $\mathcal{ML}\left(X\right)$ converges to $u$ if and only if
$\left(u_{n}\right)$ order converges to $u$.
\end{corollary}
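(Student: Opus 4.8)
The plan is to prove the stated equivalence $\mathcal{F}\in\lambda_{\mathcal{J}_{o}}(u)\Leftrightarrow\mathcal{F}\in\lambda_{o}(u)$ by directly matching the characterisation of $\lambda_{\mathcal{J}_{o}}$ obtained in Theorem \ref{ConvChar} with the definition (\ref{OCSDef}) of the order convergence structure $\lambda_{o}$, which is available on $\mathcal{ML}(X)$ by Corollary \ref{DistLat}. In one direction, suppose $\mathcal{F}\in\lambda_{o}(u)$ with witnessing sequences $(\lambda_{n})$, $(\mu_{n})$ as in (\ref{OCSDef}). Put $I_{n}=[\lambda_{n},\mu_{n}]$; these are nonempty (they contain $u$) and decreasing, since $\lambda_{n}\le\lambda_{n+1}$ and $\mu_{n+1}\le\mu_{n}$, and $[\{I_{n}:n\in\mathbb{N}\}]\subseteq\mathcal{F}$ is precisely hypothesis 3) of (\ref{OCSDef}). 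I would then check that $(I_{n})$ satisfies condition 2) of Theorem \ref{ConvChar}, i.e. $\cap_{n}I_{n|V}=\{u\}_{|V}$ for every $V\in\tau$, whence $\mathcal{F}\in\lambda_{\mathcal{J}_{o}}(u)$. Conversely, given $\mathcal{F}\in\lambda_{\mathcal{J}_{o}}(u)$, Theorem \ref{ConvChar} supplies a decreasing family $(I_{n})$ of nonempty order intervals with $[\{I_{n}\}]\subseteq\mathcal{F}$ and $\cap_{n}I_{n|V}=\{u\}_{|V}$ for all $V\in\tau$. Writing $I_{n}=[\lambda_{n},\mu_{n}]$, nonemptiness and nesting give $\lambda_{n}\le\lambda_{n+1}\le\mu_{n+1}\le\mu_{n}$, and taking $V=X$ gives $\cap_{n}I_{n}=\{u\}$, hence $\lambda_{n}\le u\le\mu_{n}$; it then remains to see that $\sup_{n}\lambda_{n}=u=\inf_{n}\mu_{n}$, after which (\ref{OCSDef}) yields $\mathcal{F}\in\lambda_{o}(u)$.

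The one point requiring care --- and the only real content beyond bookkeeping --- is the passage between ``order intervals contracting to $\{u\}$ on every open restriction'' and ``$\lambda_{n}\uparrow u$ and $\mu_{n}\downarrow u$ in $\mathcal{ML}(X)$''. For the first direction I would use that, by Theorem \ref{NLDedComp}, suprema and infima in $\mathcal{NL}$, hence in $\mathcal{ML}$, are computed by applying the Baire operators $I$ and $S$ to the pointwise sup or inf, together with the fact that $I$ and $S$ are \emph{local}, i.e. they commute with restriction to an open set (for $x$ in an open $V$ the relevant neighbourhoods may be taken inside $V$). Consequently $\sup_{n}(\lambda_{n|V})=u_{|V}$ and $\inf_{n}(\mu_{n|V})=u_{|V}$ in $\mathcal{ML}(V)$, so any $v\in\cap_{n}I_{n|V}$ is squeezed to $v=u_{|V}$, giving condition 2); and $u_{|V}\in\cap_{n}I_{n|V}$ since $u\in I_{n}$, so equality holds. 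For the converse, let $u'=\sup_{n}\lambda_{n}$, which exists by Theorem \ref{NLDedComp} and satisfies $u'\le u$; if $u'\ne u$ then, by Proposition \ref{ConEqDense} (the density and continuity statement, used just as in the proof of Proposition \ref{HnfFullyDistOrderSep}), there is a nonempty $V\in\tau$ with $u'(x)<u(x)$ on $V$, and since $\lambda_{n}\le u'\le u\le\mu_{n}$ both $u'_{|V}$ and $u_{|V}$ lie in $\cap_{n}I_{n|V}$, contradicting $\cap_{n}I_{n|V}=\{u\}_{|V}$; hence $\sup_{n}\lambda_{n}=u$, and symmetrically $\inf_{n}\mu_{n}=u$. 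A subsidiary routine check, needed in both directions, is that $I_{n|V}=\{v\in\mathcal{ML}(V):\lambda_{n|V}\le v\le\mu_{n|V}\}$, i.e. every such $v$ extends to a member of $I_{n}$, which follows from the gluing properties of normal lower semi-continuous functions.

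The two consequences are then immediate. Since $\lambda_{\mathcal{J}_{o}}=\lambda_{o}$ and $\lambda_{o}$ is Hausdorff on $\mathcal{ML}(X)$ (it is Hausdorff, regular and first countable, cf. \cite{van der Walt 1}), the uniform convergence space $(\mathcal{ML}(X),\mathcal{J}_{o})$ is uniformly Hausdorff; indeed the explicit description of the filters in $\mathcal{J}_{o}$ in Definition \ref{UOCSDef}, via order intervals that shrink on open restrictions either to a singleton or to the empty set, directly precludes such a filter from relating two distinct elements of $\mathcal{ML}(X)$. For the sequential statement, recall that a sequence $(u_{n})$ converges to $u$ in the induced convergence structure precisely when the elementary filter $[\{\{u_{k}:k\ge n\}:n\in\mathbb{N}\}]$ belongs to $\lambda_{\mathcal{J}_{o}}(u)$; by the equivalence just proved this holds if and only if that filter belongs to $\lambda_{o}(u)$, which by (\ref{SeqConv}) is exactly order convergence of $(u_{n})$ to $u$.
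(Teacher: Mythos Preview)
Your proposal is correct and follows exactly the route the paper intends: the paper gives no separate proof, stating only that the corollary is ``immediate'' from Theorem~\ref{ConvChar}, and your argument is precisely the natural unpacking of that claim, matching the interval characterisation of $\lambda_{\mathcal{J}_{o}}$ against the definition~(\ref{OCSDef}) of $\lambda_{o}$. In particular, the one substantive step you isolate --- passing between ``$\cap_{n}I_{n|V}=\{u\}_{|V}$ for all open $V$'' and ``$\sup_{n}\lambda_{n}=u=\inf_{n}\mu_{n}$'' via Proposition~\ref{ConEqDense} --- is exactly the argument already carried out inside the proof of Theorem~\ref{ConvChar}, so the paper is justified in calling the corollary immediate and your elaboration is faithful to it.
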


\section{The Completion of $\mathcal{ML}\left(X\right)$}

This section is concerned with constructing the completion of the
uniform convergence space $\mathcal{ML}\left(X\right)$.  In this
regard, recall that the completion of the convergence vector space
$\mathcal{C}\left(X\right)$, equipped with the order convergence
structure, is the set of finite Hausdorff continuous functions on
$X$ \cite{Anguelov and van der Walt}.  This space is order
isomorphic to the set a all \textit{finite} normal lower
semi-continuous functions.  Note, however, that functions
$u\in\mathcal{ML}\left(X\right)$ need not be finite everywhere,
but may, in contradistinction to functions in
$\mathcal{C}\left(X\right)$, assume the values $\pm \infty$ on any
closed nowhere dense subset of $X$.  Hence we consider the space
$\mathcal{NL}\left(X\right)$ of nearly finite normal lower
semi-continuous functions on $X$.  Following the results in
Section 3, we introduce the following uniform convergence
structure on $\mathcal{NL}\left(X\right)$.
\begin{definition}\label{UOCSNL}
Let $\tau$ be the topology on $X$, and let $\Sigma$ consist of all
nonempty order intervals in $\mathcal{NL}\left(X\right)$.  Let
$\mathcal{J}_{o}^{\sharp}$ denote the family of filters on
$\mathcal{NL}\left(X\right)\times \mathcal{NL}\left(X\right)$ that
satisfy the following:  There exists $k\in\mathbb{N}$ such that
\begin{eqnarray}
\begin{array}{ll}
    \forall & i=1,...,k\mbox{ :} \\
    \exists & \Sigma_{i}=\left(I_{n}^{i}\right)\subseteq\Sigma\mbox{ :} \\
      & \begin{array}{ll}
        1) & I_{n+1}^{i}\subseteq I_{n}^{i}\mbox{, }n\in\mathbb{N} \\
         2) & \left([\Sigma_{1}]\times [\Sigma_{1}]\right)\cap...
         \cap
         \left([\Sigma_{k}]\times [\Sigma_{k}]\right)
         \subseteq\mathcal{U}
      \end{array} \\
  \end{array}\label{DefCon1A}
\end{eqnarray}
where $[\Sigma_{i}]=[\left\{F\mbox{ : }F\in\Sigma_{i}\right\}]$.
Moreover, for every $i=1,...,k$  and $V\in\tau$ one has
\begin{eqnarray}
\begin{array}{lll}
\begin{array}{ll}
  \exists & u_{i}\in\mathcal{NL}\left(X\right)\mbox{ :} \\
          & \cap_{n\in\mathbb{N}}I_{n}^{i|V}=\left\{u_{i}\right\}_{|V}
\end{array} & \mbox{ or } & \cap_{n\in\mathbb{N}}I_{n}^{i|V}=\emptyset \\
\end{array}\label{DefCon2A}
\end{eqnarray}
\end{definition}
The following now follows by similar arguments as those employed
in Section 3.
\begin{theorem}
The family $\mathcal{J}_{o}^{\sharp}$ of filters on
$\mathcal{NL}\left(X\right)\times \mathcal{NL}\left(X\right)$ is a
Hausdorff uniform convergence structure.
\end{theorem}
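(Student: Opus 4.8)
The plan is to mirror, essentially verbatim, the three-part argument already carried out for $\mathcal{J}_o$ on $\mathcal{ML}(X)$ in Section 3, checking that each ingredient used there has an analogue for $\mathcal{NL}(X)$. First I would verify that $\mathcal{J}_o^\sharp$ is a uniform convergence structure, i.e. that it satisfies the five axioms of \cite[Definition 2.1.2]{Beattie and Butzmann}. The first four axioms (every ultrafilter of the form $[(u,u)]$ lies in the family, the family is closed under finite intersection, under refinement, and under the flip $\mathcal{U}\mapsto\mathcal{U}^{-1}$) are immediate from the defining conditions (\ref{DefCon1A})--(\ref{DefCon2A}), exactly as in the proof of Theorem \ref{UOCStructure}: condition 1) and (\ref{DefCon2A}) are symmetric in the two factors, and a larger filter still contains the same product filters. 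The only axiom requiring work is closure under composition: given $\mathcal{U},\mathcal{V}\in\mathcal{J}_o^\sharp$ with $\mathcal{U}\circ\mathcal{V}$ existing, with associated decreasing sequences of order intervals $\Sigma_1,\dots,\Sigma_k$ and $\Sigma_1',\dots,\Sigma_l'$, I would form, for each pair $(i,j)$ in the set $\Phi$ of pairs for which $[\Sigma_i]\circ[\Sigma_j']$ exists, the sequence of intervals $I_n^{i,j}=[\inf(I_n^i)\wedge\inf(I_n^j),\ \sup(I_n^i)\vee\sup(I_n^j)]$, and conclude via \cite[Proposition 2.1.1(i)]{Beattie and Butzmann} that $\mathcal{U}\circ\mathcal{V}\supseteq\bigcap\{[\Sigma_{i,j}]\times[\Sigma_{i,j}]:(i,j)\in\Phi\}$. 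Condition 1) of (\ref{DefCon1A}) for $\Sigma_{i,j}$ is clear, and condition (\ref{DefCon2A}) follows because $\mathcal{NL}(X)$ is a \emph{fully distributive} lattice (Proposition \ref{HnfFullyDistOrderSep}), so that the pointwise sup/inf formulas of Theorem \ref{NLDedComp} combine with the singleton-restriction condition the way they did for $\mathcal{ML}(X)$ in Corollary \ref{DistLat}.

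Second I would establish that $\mathcal{J}_o^\sharp$ is Hausdorff, i.e. that no non-degenerate filter on $\mathcal{NL}(X)\times\mathcal{NL}(X)$ can converge to two distinct points, equivalently that the induced convergence structure $\lambda_{\mathcal{J}_o^\sharp}$ is Hausdorff. Here I would run the analogue of the argument in the proof of Theorem \ref{ConvChar}: if $[u]\times\mathcal{F}\in\mathcal{J}_o^\sharp$ then, after discarding the indices $i$ for which $[\Sigma_i]\not\subseteq[u]$, the surviving sequences of intervals $I_n^i=[\lambda_n^i,\mu_n^i]$ satisfy $\bigcap_n I_{n|V}^i=\{u\}_{|V}$ for every $V\in\tau$, and I claim $\sup_n\lambda_n^i=u=\inf_n\mu_n^i$ in $\mathcal{NL}(X)$. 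If this failed there would be $v<w$ in $\mathcal{NL}(X)$ with $\lambda_n^i\le v<w\le\mu_n^i$ for all $n$, and by Proposition \ref{ConEqDense} (which holds for $\mathcal{NL}(X)$, not just $\mathcal{ML}(X)$) there is a nonempty open $V$ with $v(x)<w(x)$ on $V$, contradicting that $\bigcap_n I_{n|V}^i$ is a singleton-restriction or empty. Setting $\lambda_n=\inf\{\lambda_n^i:i\in\Psi\}$ and $\mu_n=\sup\{\mu_n^i:i\in\Psi\}$ and invoking full distributivity gives a single sequence $(I_n)=([\lambda_n,\mu_n])$ with $[(I_n)]\subseteq\mathcal{F}$ and $\bigcap_n I_{n|V}=\{u\}_{|V}$ for all $V$; since $u$ is then recovered as $\sup\lambda_n=\inf\mu_n$, it is uniquely determined by $\mathcal{F}$, whence Hausdorffness. (This simultaneously yields the $\mathcal{NL}(X)$-analogue of Theorem \ref{ConvChar}, which I would state explicitly for use in the completion section.)

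The main obstacle, and the thing I would be most careful about, is not the lattice-theoretic bookkeeping — that transfers mechanically — but confirming that the two crucial structural facts about $\mathcal{ML}(X)$ used in Section 3 survive the passage to the larger space $\mathcal{NL}(X)$: namely (a) that $\mathcal{NL}(X)$ is fully distributive, and (b) the unique-representation/density fact of Proposition \ref{ConEqDense}. Fact (a) is exactly Proposition \ref{HnfFullyDistOrderSep}, already proved above, and its proof was phrased for $\mathcal{NL}(X)$ from the outset, so there is nothing to redo. Fact (b) is subtler: Proposition \ref{ConEqDense} as stated gives, for each $u\in\mathcal{NL}(X)$, a set $U$ with $X\setminus U$ of first Baire category and $u\in\mathcal{C}(X\setminus U)$, plus the density-implies-order statement — and both parts are stated directly for $\mathcal{NL}(X)$, so again they apply. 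Thus the real content of the theorem is just the observation that \emph{every} step of the $\mathcal{ML}(X)$ development (Theorems \ref{UOCStructure} and \ref{ConvChar}) used only properties of $\mathcal{ML}(X)$ that are inherited from, or shared with, $\mathcal{NL}(X)$; I would therefore present the proof as ``the argument is word-for-word that of Theorems \ref{UOCStructure} and \ref{ConvChar}, with $\mathcal{ML}(X)$ replaced by $\mathcal{NL}(X)$, Corollary \ref{DistLat} replaced by Proposition \ref{HnfFullyDistOrderSep}, and Proposition \ref{ConEqDense} applied in its stated generality,'' and spell out only the composition-closure step and the Hausdorff step in the condensed form above.
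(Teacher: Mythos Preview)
Your proposal is correct and matches the paper's approach exactly: the paper gives no explicit proof, stating only that ``the following now follows by similar arguments as those employed in Section 3,'' and your plan is precisely to transport the proofs of Theorems \ref{UOCStructure} and \ref{ConvChar} from $\mathcal{ML}(X)$ to $\mathcal{NL}(X)$ using Proposition \ref{HnfFullyDistOrderSep} and Proposition \ref{ConEqDense} (both already stated for $\mathcal{NL}(X)$). Your identification of full distributivity and the density-comparison fact as the only structural inputs needed is accurate, and your condensed write-up of the composition and Hausdorff steps is faithful to the Section 3 arguments.
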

\begin{theorem}\label{NLConv}
A filter $\mathcal{F}$ on $\mathcal{ML}\left(X\right)$ belongs to
$\lambda_{\mathcal{J}_{o}}$ if and only if
$\mathcal{F}\in\lambda_{o}\left(u\right)$.
\end{theorem}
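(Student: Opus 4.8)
The plan is to prove the stated equivalence by comparing the characterization of $\lambda_{\mathcal{J}_{o}}$-convergence furnished by Theorem \ref{ConvChar} with the definition (\ref{OCSDef}) of the order convergence structure $\lambda_{o}$. Both describe convergence of $\mathcal{F}$ to $u$ by means of a decreasing sequence of nonempty order intervals $I_{n}=[\lambda_{n},\mu_{n}]$, each containing $u$, whose generated filter is contained in $\mathcal{F}$. The two conditions differ only in how the shrinking of the intervals to $u$ is expressed: (\ref{OCSDef}) demands the global identity $\sup\{\lambda_{n}\}=u=\inf\{\mu_{n}\}$ in $\mathcal{ML}\left(X\right)$, whereas Theorem \ref{ConvChar} demands the local condition $\cap_{n}I_{n|V}=\{u\}_{|V}$ for every $V\in\tau$. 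Thus the whole argument reduces to showing that, for a decreasing sequence of intervals containing $u$, these two conditions are equivalent.

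For the implication $\mathcal{F}\in\lambda_{o}\left(u\right)\Rightarrow\mathcal{F}\in\lambda_{\mathcal{J}_{o}}\left(u\right)$, I would take the sequences $\left(\lambda_{n}\right)$, $\left(\mu_{n}\right)$ supplied by (\ref{OCSDef}), set $I_{n}=[\lambda_{n},\mu_{n}]$, and verify hypothesis 2) of Theorem \ref{ConvChar}. Fix $V\in\tau$. Since $u\in I_{n}$ for every $n$, we have $u_{|V}\in\cap_{n}I_{n|V}$. Conversely, if $v\in\cap_{n}I_{n|V}$ then for each $n$ there is $w_{n}\in\mathcal{ML}\left(X\right)$ with $\lambda_{n}\leq w_{n}\leq\mu_{n}$ and $w_{n|V}=v$, whence $\lambda_{n|V}\leq v\leq\mu_{n|V}$ in $\mathcal{ML}\left(V\right)$. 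Using that the suprema and infima of Theorem \ref{NLDedComp} are computed by the local operators $I$ and $S$ of (\ref{IDef}) and (\ref{SDef}), I would argue that restriction to the open set $V$ commutes with these suprema and infima, so that $\sup_{n}\lambda_{n|V}=u_{|V}=\inf_{n}\mu_{n|V}$. Squeezing $v$ between these forces $v=u_{|V}$, giving $\cap_{n}I_{n|V}=\{u\}_{|V}$, and Theorem \ref{ConvChar} then yields $\mathcal{F}\in\lambda_{\mathcal{J}_{o}}\left(u\right)$.

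For the reverse implication I would invoke Theorem \ref{ConvChar} to obtain a decreasing family $\left(I_{n}\right)=\left([\lambda_{n},\mu_{n}]\right)$ with $[\Sigma_{\mathcal{F}}]\subseteq\mathcal{F}$ and $\cap_{n}I_{n|V}=\{u\}_{|V}$ for every $V\in\tau$. Monotonicity of the intervals together with $u\in I_{n}$ gives condition 1) of (\ref{OCSDef}) at once, so only $\sup_{n}\lambda_{n}=u=\inf_{n}\mu_{n}$ remains. This is precisely the argument already carried out inside the proof of Theorem \ref{ConvChar}: were the identity to fail, there would exist $v<w$ with $\lambda_{n}\leq v<w\leq\mu_{n}$ for all $n$, and Proposition \ref{ConEqDense} would produce a nonempty $V\in\tau$ on which $v<w$ pointwise, so that $v_{|V}$ and $w_{|V}$ are two distinct elements of $\cap_{n}I_{n|V}$, contradicting the singleton condition. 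Hence $\mathcal{F}\in\lambda_{o}\left(u\right)$.

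I expect the one genuinely technical point to be the commutation of restriction with order suprema and infima used in the first implication, namely the claim that $\left(\sup_{n}\lambda_{n}\right)_{|V}=\sup_{n}\left(\lambda_{n|V}\right)$, the supremum on the right being taken in $\mathcal{ML}\left(V\right)$. This rests on the locality of the Baire operators $I$ and $S$, whose value at a point $x$ depends only on arbitrarily small neighbourhoods of $x$, and for $x\in V$ with $V$ open such neighbourhoods may be chosen inside $V$, together with the formula for suprema in Theorem \ref{NLDedComp}. Everything else is a direct transcription between the two characterizations.
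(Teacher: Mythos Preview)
Your argument is correct and is exactly the detailed version of what the paper has in mind. The paper does not give a separate proof of this theorem; it simply states that it ``follows by similar arguments as those employed in Section 3,'' i.e., by rerunning the proof of Theorem~\ref{ConvChar} together with the immediate Corollary identifying $\lambda_{\mathcal{J}_{o}}$ with $\lambda_{o}$. Your write-up spells out precisely that identification: the reverse implication is, as you note, already contained in the proof of Theorem~\ref{ConvChar}, and the forward implication amounts to checking that $\sup_{n}\lambda_{n}=u=\inf_{n}\mu_{n}$ forces $\cap_{n}I_{n|V}=\{u\}_{|V}$, which you handle via the locality of the Baire operators $I$ and $S$ in (\ref{IDef})--(\ref{SDef}) and Theorem~\ref{NLDedComp}. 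That technical point is the only nontrivial content, and your justification is the right one. Note also that in context (Section~4) the result is meant for $\mathcal{NL}\left(X\right)$ and $\mathcal{J}_{o}^{\sharp}$; your argument transfers verbatim, and indeed works more smoothly there since $\mathcal{NL}\left(X\right)$ is Dedekind complete.
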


We now proceed to show that $\mathcal{NL}\left(X\right)$ is the
completion of $\mathcal{ML}\left(X\right)$.  That is, we show that
the following three conditions are satisfied:
\begin{itemize}
    \item The uniform convergence space
    $\mathcal{NL}\left(X\right)$ is complete
    \item $\mathcal{ML}\left(X\right)$ is uniformly isomorphic to
    a dense subspace of $\mathcal{NL}\left(X\right)$
    \item Any uniformly continuous mapping $\varphi$ on
    $\mathcal{ML}\left(X\right)$ into a complete, Hausdorff uniform convergence space
    $Y$ extends uniquely to a uniformly continuous mapping $\varphi^{\sharp}$
    from $\mathcal{NL}\left(X\right)$ into $Y$.
\end{itemize}
\begin{proposition}
The uniform convergence space $\mathcal{NL}\left(X\right)$ is
complete.
\end{proposition}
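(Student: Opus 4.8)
The plan is to show that every Cauchy filter $\mathcal{F}$ on $\mathcal{NL}(X)$ converges. By the analogue of Theorem \ref{ConvChar} for $\mathcal{NL}(X)$ (obtained, as stated in the excerpt, by repeating the Section 3 arguments), it suffices to produce a decreasing sequence $\Sigma_{\mathcal{F}} = (I_n)$ of nonempty order intervals in $\mathcal{NL}(X)$ with $[\Sigma_{\mathcal{F}}] \subseteq \mathcal{F}$ and such that for every $V \in \tau$ we have $\cap_n I_{n|V} = \{u\}_{|V}$ for the limit point $u$. So the real content is: given that $\mathcal{F} \times \mathcal{F} \in \mathcal{J}_o^{\sharp}$, extract such a family and identify the candidate limit $u$.

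First I would unpack the definition of $\mathcal{F} \times \mathcal{F} \in \mathcal{J}_o^{\sharp}$: there are finitely many chains $\Sigma_1, \dots, \Sigma_k$ of nested order intervals with $\bigcap_{i=1}^k ([\Sigma_i] \times [\Sigma_i]) \subseteq \mathcal{F} \times \mathcal{F}$, each satisfying the restriction condition (\ref{DefCon2A}). As in the proof of Theorem \ref{ConvChar}, the finite intersection of filters on the left forces a Ramsey-type argument: for a set $A \in \mathcal{F}$, writing $I_n^i = [\lambda_n^i, \mu_n^i]$, one shows that $\mathcal{F}$ must in fact refine $[\Sigma_i]$ for at least the indices $i$ in some nonempty subset $\Psi$, and for $i \in \Psi$ the intervals $I_n^i$ have a common ``width zero'' behaviour: by Proposition \ref{ConEqDense} (which holds verbatim on $\mathcal{NL}(X)$ since it is proved via the order isomorphism with $\mathbb{H}_{nf}(X)$), condition (\ref{DefCon2A}) forces $\sup_n \lambda_n^i = \inf_n \mu_n^i$. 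The key point where $\mathcal{NL}(X)$ does better than $\mathcal{ML}(X)$: because $\mathcal{NL}(X)$ is \emph{Dedekind complete} (Theorem \ref{NLDedComp}), this common value is an honest element $u_i \in \mathcal{NL}(X)$, whereas in $\mathcal{ML}(X)$ a Cauchy filter could ``point at'' a supremum lying outside the space. Full distributivity (Proposition \ref{HnfFullyDistOrderSep}) then lets us amalgamate the $\Psi$-indexed data into a single chain $\Sigma_{\mathcal{F}} = ([\lambda_n, \mu_n])$ with $\lambda_n = \inf\{\lambda_n^i : i \in \Psi\}$, $\mu_n = \sup\{\mu_n^i : i \in \Psi\}$, exactly as at the end of the proof of Theorem \ref{ConvChar}, and one checks $\sup_n \lambda_n = \inf_n \mu_n =: u \in \mathcal{NL}(X)$ and $[\Sigma_{\mathcal{F}}] \subseteq \mathcal{F}$.

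It then remains to verify the restriction condition $\cap_n I_{n|V} = \{u\}_{|V}$ for every open $V$: the inclusion $\{u\}_{|V} \subseteq \cap_n I_{n|V}$ is clear since $\lambda_n \le u \le \mu_n$; for the reverse, if some $w \in \mathcal{NL}(V)$ lay in every $I_{n|V}$ but differed from $u$ on $V$, Proposition \ref{ConEqDense} would give a nonempty open subset of $V$ on which $w$ and $u$ are separated, contradicting $\sup_n \lambda_n = u = \inf_n \mu_n$ (evaluated via Theorem \ref{NLDedComp}). Invoking the $\mathcal{NL}(X)$-analogue of Theorem \ref{ConvChar}, this exhibits $\mathcal{F}$ as convergent to $u$, so $\mathcal{NL}(X)$ is complete.

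The main obstacle I anticipate is the bookkeeping in the Ramsey step — showing that $\mathcal{F}$ genuinely refines $\bigcap_{i \in \Psi}[\Sigma_i]$ for a nonempty $\Psi$, i.e. that the diagonal-type argument used for $[u] \times \mathcal{F}$ in Theorem \ref{ConvChar} still works when both factors are the same non-principal filter $\mathcal{F}$ rather than a point ultrafilter. One must choose, for each $i \notin \Psi$, a set $A_i \in \mathcal{I}^i$ together with a set in $\mathcal{F}$ avoiding it in the appropriate sense, using that $\mathcal{F}$ is proper; once this is set up the rest is the routine transfer of the Section 3 machinery to the Dedekind-complete ambient lattice.
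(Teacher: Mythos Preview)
Your approach tries to transplant the proof of Theorem~\ref{ConvChar} directly, selecting a subset $\Psi \subseteq \{1,\ldots,k\}$ and amalgamating the chains $\Sigma_i$ over $i \in \Psi$ into a single chain witnessing convergence of $\mathcal{F}$. The gap is exactly where you suspect, but not quite for the reason you state. In Theorem~\ref{ConvChar} the set $\Psi = \{i : [\Sigma_i] \subseteq [u]\}$ is defined using the \emph{already given} limit $u$, and this has two consequences: for $i \notin \Psi$ one can pick $A_i \in [\Sigma_i]$ with $u \notin A_i$, which drives the inclusion (\ref{Claim}); and, more importantly, for $i \in \Psi$ one has $u \in \cap_n I_n^i$, forcing all the $u_i$ with $i \in \Psi$ to coincide with $u$, so that the amalgamated intervals $[\inf_{i\in\Psi} \lambda_n^i,\, \sup_{i\in\Psi} \mu_n^i]$ still collapse to a single point. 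For a bare Cauchy filter there is no candidate $u$ available to define $\Psi$. Taking $\Psi = \{1,\ldots,k\}$ does yield $\mathcal{F} \supseteq \bigcap_i [\Sigma_i]$ (this follows easily from $\bigcap_i([\Sigma_i]\times[\Sigma_i]) \subseteq \mathcal{F}\times\mathcal{F}$ by looking at diagonal pairs), but the $u_i$ need not coincide, and then your amalgamated chain fails $\sup_n \lambda_n = \inf_n \mu_n$. Taking instead $\Psi = \{i : \mathcal{F} \supseteq [\Sigma_i]\}$ would force all such $u_i$ to agree (by Hausdorffness), but nothing guarantees this $\Psi$ is nonempty: a non-ultra filter refining a finite intersection need not refine any single summand.

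The paper sidesteps the whole issue with a short ultrafilter argument. From $\mathcal{F} \supseteq \bigcap_{i=1}^k [\Sigma_i]$, any ultrafilter $\mathcal{G} \supseteq \mathcal{F}$ must refine some single $[\Sigma_i]$ and hence converge to the corresponding $u_i$; then the standard fact for uniform convergence spaces (Beattie--Butzmann, Proposition~2.3.2(iii)) that a Cauchy filter possessing a convergent finer filter is itself convergent yields $\mathcal{F} \to u_i$. This is both shorter and avoids having to identify $\Psi$ or reconcile the possibly distinct $u_i$ by hand. If you want to repair your route, the cleanest fix is precisely to insert this ultrafilter step to produce the limit $u$ first, after which your $\Psi$ and amalgamation become available---but at that point the proof is already finished.
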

\begin{proof}
Let $\mathcal{F}$ be a Cauchy filter on
$\mathcal{NL}\left(X\right)$, so that $\mathcal{F}\times
\mathcal{F}\in\mathcal{J}_{o}^{\sharp}$. Let
$\Sigma_{1},...,\Sigma_{k}$ be the families of order intervals
associated with $\mathcal{F}\times \mathcal{F}$ through Definition
\ref{UOCSNL}.  Since $\mathcal{NL}\left(X\right)$ is Dedekind
complete it follows by (\ref{DefCon2A}) that, for each $i=1,...,k$
\begin{eqnarray}
\sup\{\lambda_{n}^{i}\mbox{ : }n\in\mathbb{N}\}=u_{i}
=\inf\{\mu_{n}^{i}\mbox{ : }n\in\mathbb{N}\}
\end{eqnarray}
for some $u_{i}\in\mathcal{NL}\left(X\right)$, where
$I_{n}^{i}=[\lambda^{i}_{n},\mu^{i}_{n}]$ for each
$n\in\mathbb{N}$. By Theorem \ref{NLConv} each of the filters
$\mathcal{F}_{i}=[\Sigma_{i}]$ converges to $u_{i}$. Let
$\mathcal{G}\supseteq \mathcal{F}$ be an ultrafilter.  Since
\begin{eqnarray}
\mathcal{F}\supseteq \mathcal{F}_{1}\cap...\cap\mathcal{F}_{k}
\nonumber
\end{eqnarray}
it follows that $\mathcal{G}\supseteq\mathcal{F}_{i}$ for at least
one $i=1,...,k$, so that $\mathcal{G}$ converges to $u_{i}$.
Therefore \cite[Proposition 2.3.2 (iii)]{Beattie and Butzmann} the
filter $\mathcal{F}$ converges to $u_{i}$.  This completes the
proof.
\end{proof}\\ \\
\begin{theorem}\label{Completion}
Let $X$ be a metric space.  Then the space
$\mathcal{NL}\left(X\right)$ is the uniform convergence space
completion of $\mathcal{ML}\left(X\right)$.
\end{theorem}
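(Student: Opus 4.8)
The plan is to verify the three bulleted conditions listed just before the statement, namely completeness of $\mathcal{NL}\left(X\right)$, density of $\mathcal{ML}\left(X\right)$ as a uniform subspace, and the universal extension property. Completeness has already been established in the preceding Proposition, so two tasks remain. For the density claim, I would first check that the uniform convergence structure $\mathcal{J}_{o}$ on $\mathcal{ML}\left(X\right)$ coincides with the subspace uniform convergence structure induced by $\mathcal{J}_{o}^{\sharp}$; this is essentially immediate from comparing Definition \ref{UOCSDef} and Definition \ref{UOCSNL}, since an order interval in $\mathcal{ML}\left(X\right)$ with endpoints in $\mathcal{ML}\left(X\right)$ is the trace on $\mathcal{ML}\left(X\right)$ of the corresponding order interval in $\mathcal{NL}\left(X\right)$, and conditions \eqref{DefCon2} and \eqref{DefCon2A} match up via Proposition \ref{ConEqDense}. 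To show $\mathcal{ML}\left(X\right)$ is dense in $\mathcal{NL}\left(X\right)$, I would take an arbitrary $u\in\mathcal{NL}\left(X\right)$ and exhibit a filter on $\mathcal{ML}\left(X\right)$ that converges to $u$ in $\lambda_{\mathcal{J}_{o}^{\sharp}}$: the natural candidate is generated by a decreasing sequence of order intervals $[\lambda_{n},\mu_{n}]$ with $\lambda_{n},\mu_{n}\in\mathcal{ML}\left(X\right)$, $\sup\lambda_{n}=u=\inf\mu_{n}$. Here one uses that $X$ is a metric space (hence $\mathcal{NL}\left(X\right)$ is order separable, as in the $\mathbb{H}_{ft}$ discussion of the Introduction) together with Theorem \ref{NLDedComp}, so that $u$ can be squeezed between monotone sequences drawn from the dense sublattice $\mathcal{ML}\left(X\right)$; metrizability of $X$ is what guarantees such sequences exist, rather than merely nets.

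For the universal property, let $Y$ be a complete Hausdorff uniform convergence space and $\varphi:\mathcal{ML}\left(X\right)\to Y$ uniformly continuous. Given $u\in\mathcal{NL}\left(X\right)$, pick a filter $\mathcal{G}$ on $\mathcal{ML}\left(X\right)$ converging to $u$ as above; then $\mathcal{G}\times\mathcal{G}\in\mathcal{J}_{o}^{\sharp}$ restricted to $\mathcal{ML}\left(X\right)$ lies in $\mathcal{J}_{o}$, so $\varphi(\mathcal{G})\times\varphi(\mathcal{G})$ is a Cauchy filter in $Y$, which therefore converges to a unique limit; define $\varphi^{\sharp}(u)$ to be that limit. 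Well-definedness (independence of the choice of $\mathcal{G}$) follows because any two such filters have a common lower bound that still converges to $u$ — this is where I would lean on the lattice structure and the explicit form of convergent filters from Theorem \ref{ConvChar}/Theorem \ref{NLConv}, taking componentwise infima and suprema of the interval endpoints as is done in the proof of Theorem \ref{UOCStructure}. Uniform continuity of $\varphi^{\sharp}$ is then checked by taking an arbitrary $\mathcal{U}\in\mathcal{J}_{o}^{\sharp}$ with its associated families $\Sigma_{1},\dots,\Sigma_{k}$, refining each $\Sigma_{i}$ to a family with endpoints in $\mathcal{ML}\left(X\right)$ (again using density and order separability), and pushing the resulting filter through $\varphi$; uniqueness of the extension is automatic since $\mathcal{ML}\left(X\right)$ is dense and $Y$ is Hausdorff.

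I expect the main obstacle to be the well-definedness and uniform continuity of $\varphi^{\sharp}$, specifically the bookkeeping needed to replace a general family $\left(I_{n}^{i}\right)$ of order intervals in $\mathcal{NL}\left(X\right)$ by one whose endpoints lie in $\mathcal{ML}\left(X\right)$ while preserving conditions \eqref{DefCon1A}–\eqref{DefCon2A}. The delicate point is that the "singleton or empty on each open $V$" condition \eqref{DefCon2A} must survive the approximation; here one argues that if $\cap_n I_n^{i|V}=\{u_i\}_{|V}$ with $u_i\in\mathcal{NL}\left(X\right)$, then by order separability one can choose monotone sequences in $\mathcal{ML}\left(X\right)$ converging to the endpoints whose intersection still collapses to $\{u_i\}_{|V}$ on every open set, invoking Proposition \ref{ConEqDense} to control the behaviour on dense subsets. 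Once this refinement lemma is in place, the three conditions assemble routinely into the conclusion that $\mathcal{NL}\left(X\right)$, with $\mathcal{J}_{o}^{\sharp}$, is the completion of $\left(\mathcal{ML}\left(X\right),\mathcal{J}_{o}\right)$.
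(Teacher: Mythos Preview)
Your overall three-step plan matches the paper's, but you have the relative difficulties inverted.

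You treat the fact that $\iota:\mathcal{ML}(X)\hookrightarrow\mathcal{NL}(X)$ is a uniform embedding as ``essentially immediate via Proposition~\ref{ConEqDense}'', yet this is where the paper does the real work. The issue is that an order interval $[\lambda_n,\mu_n]$ with endpoints in $\mathcal{ML}(X)$ is strictly smaller than the same interval taken in $\mathcal{NL}(X)$, so condition (\ref{DefCon2}) (the intersection in $\mathcal{ML}(V)$ is a singleton or empty) does not automatically yield (\ref{DefCon2A}) (the intersection in $\mathcal{NL}(V)$ is a singleton or empty). If two distinct $u,v\in\mathcal{NL}(X)$ lie in $\bigcap_n[\lambda_n,\mu_n]_{|V}$, one must exhibit two distinct elements of $\mathcal{ML}(X)$ in the $\mathcal{ML}$-intersection to reach a contradiction; Proposition~\ref{ConEqDense} does not produce such functions. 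The paper handles this by applying Hahn's insertion theorem twice to sandwich continuous---hence $\mathcal{ML}$---functions $\phi,\psi$ between $\sup_n\lambda_n$ and the $\mu_n$ on an open set where $u<v$.

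For density the paper is more concrete than your abstract two-sided squeeze: it restricts $u\in\mathcal{NL}(X)$ to the open dense set $D_u=\{x:u(x)\in\mathbb{R}\}$, writes $u$ there as the pointwise supremum of a sequence of continuous functions (this is where metrizability of $X$ enters, via \cite{Anguelov and van der Walt}), extends each by zero off $D_u$, and applies $I\circ S$ to obtain an increasing sequence $(v_n)\subset\mathcal{ML}(X)$ with $\sup_n v_n=u$ in $\mathcal{NL}(X)$. A one-sided approximation suffices; no upper sequence $(\mu_n)$ is built.

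Conversely, the extension property you single out as the main obstacle is dismissed by the paper in one line: once $\mathcal{NL}(X)$ is complete, Hausdorff, and contains $\mathcal{ML}(X)$ as a dense uniformly embedded subspace, the extension of any uniformly continuous $\varphi$ into a complete Hausdorff space follows from the general completion machinery for uniform convergence spaces in \cite{Beattie and Butzmann}. Your refinement-lemma and interval-endpoint bookkeeping are not needed.
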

\begin{proof}
First we show that the identity mapping
$\iota:\mathcal{ML}\left(X\right)\rightarrow
\mathcal{NL}\left(X\right)$ is a uniformly continuous embedding.
In this regard, it is sufficient to consider a filter
$[\Sigma_{\mathcal{F}}]$ where $\Sigma_{\mathcal{F}}$ is a family
of nonempty order intervals in $\mathcal{ML}\left(X\right)$ that
satisfies 1) of (\ref{DefCon1}) and (\ref{DefCon2}).  Clearly
\begin{eqnarray}
\begin{array}{ll}
\forall & I_{n}=[\lambda_{n},\mu_{n}]\in\Sigma_{\mathcal{F}}\mbox{ :} \\
& \iota\left(I_{n}\right)\subseteq [\iota\left(\lambda_{n}\right),\iota\left(\mu_{n}\right)] \\
\end{array}
\end{eqnarray}
The family
\begin{eqnarray}
\Sigma_{\iota\left(\mathcal{F}\right)}=
\left(I_{n}'\right)=\left\{[\iota\left(\lambda_{n}\right),\iota\left(\mu_{n}\right)]\mbox{
: }n\in\mathbb{N}\right\}
\end{eqnarray}
satisfies 1) of (\ref{DefCon1A}).  To see that (\ref{DefCon2A})
holds, we proceed by contradiction.  Assume that for some $W\in
\tau$
\begin{eqnarray}
\begin{array}{ll}
\exists & u,v\in\mathcal{NL}\left(X\right)\mbox{ :} \\
& \cap_{n\in\mathbb{N}}I_{n|W}'\supseteq \{u,v\}_{|W} \\
\end{array}\label{BBB}
\end{eqnarray}
where $u_{|W}\neq v_{|W}$.  We may assume that
$u\left(x\right)<v\left(x\right)$, $x\in W$. Clearly,
\begin{eqnarray}
\lambda_{n}\left(x\right)\leq \varphi\left(x\right)\leq
u\left(x\right)<v\left(x\right)\leq \mu_{n}\mbox{, }x\in W
\end{eqnarray}
for every $n\in\mathbb{N}$, where
\begin{eqnarray}
\varphi\left(x\right)=\sup\{\lambda_{n}\left(x\right)\mbox{ :
}n\in\mathbb{N}\}\nonumber
\end{eqnarray}
which is upper semi-continuous. Applying Hahn's Theorem twice we
find
\begin{eqnarray}
\begin{array}{ll}
\exists & \phi,\psi\in\mathcal{C}\left(W\right)\mbox{ :} \\
& \{\phi,\psi\}\subseteq \cap_{n\in\mathbb{N}}I_{n|W} \\
\end{array}\nonumber
\end{eqnarray}
which contradicts (\ref{DefCon2}) so that (\ref{DefCon2A}) must
hold. That $\iota^{-1}$ is uniformly continuous is trivial.\\
To see that $\iota\left(\mathcal{ML}\left(X\right)\right)$ is
dense in $\mathcal{NL}\left(X\right)$, consider any
$u\in\mathcal{NL}\left(X\right)$, and set
\begin{eqnarray}
D_{u}=\{x\in X\mbox{ : }u\left(x\right)\in\mathbb{R}\}\nonumber
\end{eqnarray}
Since $D_{u}$ is open, it follows that $u$ restricted to $D_{u}$
is normal lower semi-continuous.  Since $u$ is also finite on
$D_{u}$ it follows, see \cite[Proof of Theorem 26]{Anguelov and
van der Walt} that there exists a sequence $\left(u_{n}\right)$ of
continuous functions on $D_{u}$ such that
\begin{eqnarray}
u\left(x\right)=\sup\{u_{n}\left(x\right)\mbox{ :
}n\in\mathbb{N}\}\mbox{, }x\in D_{u}\label{sup}
\end{eqnarray}
Consider now the sequence $\left(v_{n}\right)=\left(\left(I\circ
S\right)\left(u_{n}^{0}\right)\right)$ where
\begin{eqnarray}
u_{n}^{0}\left(x\right)=\left\{\begin{array}{lll}
u_{n}\left(x\right) & \mbox{ if }& x\in D_{u} \\
0 & \mbox{ if } & x\notin D_{u} \\
\end{array}\right.
\end{eqnarray}
Clearly $v_{n}\left(x\right)=u_{n}\left(x\right)$ for every $x\in
D_{u}$.  We claim
\begin{eqnarray}
u=sup \{v_{n}\mbox{ : }n\in\mathbb{N}\}\label{DClaim}
\end{eqnarray}
If (\ref{DClaim}) does not hold, then
\begin{eqnarray}
\begin{array}{ll}
\exists & v\in\mathcal{NL}\left(X\right)\mbox{ :} \\
& n\in\mathbb{N}\Rightarrow v_{n}\leq v<u \\
\end{array}\nonumber
\end{eqnarray}
But then, in view of Proposition \ref{ConEqDense}, and the fact
that $D_{u}$ is open and dense, there exists an open and nonempty
set $W\subseteq D_{u}$ such that
\begin{eqnarray}
\begin{array}{ll}
\forall & x\in W\mbox{ :} \\
& n\in\mathbb{N}\Rightarrow u_{n}\left(x\right)\leq
v\left(x\right)< u\left(x\right)
\end{array}\nonumber
\end{eqnarray}
which contradicts (\ref{sup}).  Therefore (\ref{DClaim}) must
hold.  The sequence $\left(v_{n}\right)$ is clearly a Cauchy
sequence in $\mathcal{ML}\left(X\right)$ so that
$\mathcal{ML}\left(X\right)$ is dense in
$\mathcal{NL}\left(X\right)$.\\
The extension property for uniformly continuous mappings on
$\mathcal{ML}\left(X\right)$ follows in the standard way.
\end{proof}\\ \\
Note that in the above proof, we actually showed that
$\mathcal{NL}\left(X\right)$ is the Dedekind completion of
$\mathcal{ML}\left(X\right)$.  Hence the uniform order convergence
structure provides a nonlinear topological model for the process
of taking the Dedekind completion of $\mathcal{ML}\left(X\right)$.
In view of Proposition \ref{OrderIsomorphism}, this extends a
previous result of Anguelov \cite{Anguelov} on the Dedekind
completion of $\mathcal{M}\left(X\right)$.

\section{An Application to Nonlinear PDEs}

As an illustration of how the results developed in this paper may
be applied to the problem of obtaining generalized solutions to
nonlinear PDEs, we consider the Navier-Stokes equations in three
spatial dimensions given by
\begin{eqnarray}
\begin{array}{l}
\frac{\partial}{\partial t}u_{i}\left(x,t\right) +
\sum_{j=1}^{3}u_{j}\left(x,t\right)\frac{\partial}{\partial
x_{i}}u_{j}\left(x,t\right) -\nu
\sum_{j=1}^{3}\frac{\partial^{2}}{\partial x_{j}^{2}}
u_{i}\left(x,t\right) +\frac{\partial p}{\partial
x_{i}}\left(x,t\right)  =
f\left(x,t\right) \\
\\
\sum_{i=1}^{3}\frac{\partial}{\partial x_{i}}u_{i}\left(x,t\right) = 0 \\
\end{array}\label{NSEq}
\end{eqnarray}
where $\left(x,t\right)\in\Omega=\mathbb{R}^{3}\times [0,\infty)$,
and $f\in\mathcal{C}^{0}\left(\Omega,\mathbb{R}^{3}\right)$. We
also require the unknown function
$u=\left(u_{1},u_{2},u_{3}\right)$ to satisfy the initial value
\begin{eqnarray}
u\left(x,0\right)=u^{0}\left(x\right)\mbox{,
}x\in\mathbb{R}^{3}\label{Initval}
\end{eqnarray}
where $u^{0}\in\mathcal{C}^{2}\left(\mathbb{R}^{3},
\mathbb{R}^{3}\right)$ is a given, divergence free vector field.
The equations (\ref{NSEq}) are supposed to model the motion of a
fluid through three dimensional space, where $u$ specifies the
velocity, and $p$ the pressure in the fluid.   We write the
equation (\ref{NSEq}) in the compact form
\begin{eqnarray}
T\left(x,t,D\right)v\left(x,t\right)=g\left(x,t\right)\mbox{, }
\left(x,t\right)\in\Omega\nonumber
\end{eqnarray}
where $v=\left(u,p\right)$, $g=\left(f,0\right)$ and the nonlinear
PDE operator $T\left(x,t,D\right)$ is defined through a continuous
mapping $F:\Omega\times\mathbb{R}^{K} \rightarrow \mathbb{R}^{4}$
by
\begin{eqnarray}
T\left(x,t,D\right)v\left(x,t\right)=
F\left(x,t,v\left(x,t\right),...,D^{\alpha}
v\left(x,t\right),...\right)\mbox{, }|\alpha|\leq 2\label{PDEOp}
\end{eqnarray}
With the system of PDEs (\ref{NSEq}) we can associate a mapping
\begin{eqnarray}
T:\mathcal{C}^{2}\left(\Omega\right)^{4}\ni u \mapsto
\left(T_{1}u,T_{2}u,T_{3}u,T_{4}u\right)\in
\mathcal{C}^{0}\left(\Omega\right)^{4} \label{AssMap}
\end{eqnarray}
In view of (\ref{PDEOp}), one may extend the mappings $T$ uniquely
to
\begin{eqnarray}
T:\mathcal{C}^{2}_{nd}\left(\Omega\right)^{4}\rightarrow
\mathcal{C}^{0}_{nd}\left(\Omega\right)\nonumber
\end{eqnarray}
Then, for $i=1,...,3$
\begin{eqnarray}
T_{i}:X\ni v\mapsto \left(I\circ
S\right)\left(\frac{\partial}{\partial t}u +
\sum_{j=1}^{3}u_{j}\frac{\partial}{\partial x_{i}}u_{j} -\nu
\sum_{j=1}^{3}\frac{\partial^{2}}{\partial x_{j}^{2}}u_{i}
+\frac{\partial p}{\partial x_{i}}\right)\in Y \label{ExtPDEI}
\end{eqnarray}
and
\begin{eqnarray}
T_{4}: X\ni v\mapsto \left(I\circ
S\right)\left(\sum_{i=1}^{4}\frac{\partial}{\partial
x_{i}}u\right)\in Y\label{ExtPDEII}
\end{eqnarray}
define unique extensions of the components of $T$ to $X$, where
\begin{eqnarray}
X=\mathcal{ML}^{2}_{0}\left(\Omega\right)^{4},\nonumber
\end{eqnarray}
\begin{eqnarray}
Y=\mathcal{ML}^{0}\left(\Omega\right)^{4}\nonumber
\end{eqnarray}
where, for $m\in\mathbb{N}$,
\begin{eqnarray}
\mathcal{ML}^{m}_{0}\left(\Omega\right)=
\left\{u\in\mathcal{ML}^{0}\left(\Omega\right)\begin{array}{|ll}
1) & u\left(\cdot,0\right)\in\mathcal{C}^{m}\left(\mathbb{R}^{3}\right) \\
2) & \begin{array}{ll} \exists & \Gamma\subset
\Omega\mbox{ closed nowhere dense :} \\
 & u\in\mathcal{C}^{m}\left(\Omega\setminus \Gamma\right) \\
\end{array} \\
\end{array}\right\}
\end{eqnarray}
With the initial value problem (\ref{Initval}) we associate the
mapping
\begin{eqnarray}
R_{0}:X\ni u\mapsto u_{|t=0}\in Z\label{Rest}
\end{eqnarray}
where
\begin{eqnarray}
Z=\mathcal{C}^{2}\left(\mathbb{R}^{3},\mathbb{R}^{3}\right)\nonumber
\end{eqnarray}
That is, $R_{0}$ assigns to $u\in X$ the restriction of $u$ to the
hyperplane $\mathbb{R}^{3}\times \{0\}$.  Note that this amounts
to a \textit{separation} of the problem of solving the system of
PDEs (\ref{NSEq}), and the problem of satisfying the initial
value.  This is a characteristic feature of the Order Completion
Method \cite{Obergugenberger and Rosinger}, and the
pseudo-topological version of the theory developed here and in
\cite{van der Walt 3}.  What is more, and as will be seen in the
sequel, this allows for the rather straight forward and easy
treatment of boundary and / or boundary value problems, when
compared to the usual functional analytic methods.

Define the mapping $T_{0}$ as
\begin{eqnarray}
T_{0}:X\ni v=\left(u,p\right)\mapsto \left(Tv,R_{0}u\right)\in
Y\times Z\label{T0Def}
\end{eqnarray}
The mapping $T_{0}$ induces an equivalence relation $\sim_{T_{0}}$
on $X$ through
\begin{eqnarray}
\begin{array}{ll}
\forall & v,w\in X\mbox{ :} \\
& v\sim_{T_{0}}w\Leftrightarrow T_{0}v=T_{0}w \\
\end{array}
\end{eqnarray}
The quotient space $X/\sim_{T_{0}}$ is denotes $X_{T_{0}}$.  There
is then an \textit{injective} mapping
\begin{eqnarray}
\widehat{T}_{0}:X_{T_{0}}\ni V\mapsto\left(T_{0}v,R_{0}u\right)\in
Y\times Z
\end{eqnarray}
where $v=\left(u,p\right)$ is any member of the equivalence class $V$, such that the diagram\\

\begin{math}
\setlength{\unitlength}{1cm} \thicklines
\begin{picture}(13,6)

\put(2.9,5.4){$X$} \put(3.3,5.5){\vector(1,0){6.0}}
\put(9.5,5.4){$Y\times Z$} \put(5.9,5.7){$T_{0}$}
\put(3.0,5.2){\vector(0,-1){3.5}} \put(3.4,1.4){\vector(1,0){6.0}}
\put(2.9,1.3){$X_{T_{0}}$} \put(9.5,1.3){$Y\times Z$}
\put(2.4,3.4){$q_{T_{0}}$} \put(9.8,3.4){$i$}
\put(9.6,5.2){\vector(0,-1){3.5}} \put(5.9,1.6){$\widehat{T}_{0}$}

\end{picture}
\end{math}\\
commutes, with $q_{T_{0}}$ the quotient mapping.

We equip the space $\mathcal{ML}^{0}\left(\Omega\right)$ with the
uniform order convergence structure $\mathcal{J}_{o}$, and $Y$
carries the product uniform convergence structure.  The space $Z$
carries the uniform convergence structure $\mathcal{J}_{\lambda}$,
see \cite{Beattie and Butzmann}, associated with the convergence
structure
\begin{eqnarray}
\begin{array}{ll}
\forall & u\in Z\mbox{ :} \\
& \lambda\left(u\right)=[u] \\
\end{array}\label{ZCS}
\end{eqnarray}
That is,
\begin{eqnarray}
\begin{array}{ll}
\forall & \mathcal{U}\mbox{ a filter on $Z\times Z$ :} \\
& \mathcal{U}\in\mathcal{J}_{\lambda}\Leftrightarrow
\left(\begin{array}{ll}
\exists & u_{1},...,u_{k}\in Z\mbox{ :} \\
& \left([u_{1}]\times [u_{1}]\right)\cap...\cap \left([u_{k}]\times [u_{k}]\right)\subseteq \mathcal{U} \\
\end{array}\right) \\
\end{array}
\end{eqnarray}
Note that $\mathcal{J}_{\lambda}$ induces the convergence
structure $\lambda$, and is uniformly Hausdorff and complete
\cite{Beattie and Butzmann}.  In particular, the sequences which
converge with respect to $\mathcal{J}_{\lambda}$ are exactly the
constant sequences. The product space $Y\times Z$ carries the
product uniform convergence structure, which we denote by
$\mathcal{J}_{P}$. In view of Theorem \ref{Completion} and
\cite[Theorem 3.1]{van der Walt 4} the completion $\left(Y\times
Z\right)^{\sharp}$ of $Y\times Z$ is
$\mathcal{NL}\left(\Omega\right)^{4}\times Z$, equipped with the
product uniform convergence structure with respect to the uniform
convergence structure $\mathcal{J}_{o}^{\sharp}$ and the uniform
convergence structure $\mathcal{J}_{\lambda}$.  We equip
$X_{T_{0}}$ with the initial uniform convergence structure
$\mathcal{J}_{T_{0}}$ with respect to the mapping
$\widehat{T}_{0}$.  That is,
\begin{eqnarray}
\begin{array}{ll}
\forall & \mathcal{U}\mbox{ a filter on $X_{T_{0}} \times X_{T_{0}}$ :} \\
& \mathcal{U}\in \mathcal{J}_{T_{0}}\Leftrightarrow \left(\widehat{T}_{0}\times\widehat{T}_{0}\right)\left(\mathcal{U}\right)\in\mathcal{J}_{P} \\
\end{array}
\end{eqnarray}
Since $\widehat{T}_{0}$ is injective, it is a uniformly continuous
embedding so that $X_{T_{0}}$ is uniformly isomorphic to a
subspace of $Y\times Z$.  Therefore, see \cite{van der Walt 4},
the the mapping $\widehat{T}_{0}$ extends to a uniformly
continuous embedding
\begin{eqnarray}
\widehat{T}_{0}^{\sharp}:X_{t_{0}}^{\sharp}\rightarrow
\left(Y\times Z\right)^{\sharp}
\end{eqnarray}
so that $X_{t_{0}}^{\sharp}$ is uniformly isomorphic to a subspace
of $\left(Y\times Z\right)^{\sharp}$.
This is summarized in the following commutative diagram.\\

\begin{math}
\setlength{\unitlength}{1cm} \thicklines
\begin{picture}(13,6)

\put(2.9,5.4){$X_{T_{0}}$} \put(3.5,5.5){\vector(1,0){5.8}}
\put(9.5,5.4){$Y\times Z$} \put(5.9,5.7){$\widehat{T}_{0}$}
\put(3.0,5.2){\vector(0,-1){3.5}} \put(3.4,1.4){\vector(1,0){6.0}}
\put(2.9,1.3){$X_{T_{0}}^{\sharp}$} \put(9.5,1.3){$\left(Y\times
Z\right)^{\sharp}$} \put(9.6,5.2){\vector(0,-1){3.5}}
\put(5.9,1.6){$\widehat{T}_{0}^{\sharp}$}

\end{picture}
\end{math}\\
A generalized solution to (\ref{NSEq}) through (\ref{Initval}) is
any $V^{\sharp}\in X_{T_{0}}^{\sharp}$ that satisfies the equation
\begin{eqnarray}
\widehat{T}_{0}^{\sharp}V^{\sharp}=g\label{GenEq}
\end{eqnarray}

The main result of this section, concerning the existence of
generalized solutions to (\ref{NSEq}) through (\ref{Initval}), is
based on the existence of approximate solutions, which follows
form the following \cite{van der Walt 3}.  We include the proof to
illustrate the technique.
\begin{lemma}\label{FirstApprox}
Consider any
$g=\left(f,0\right)\in\mathcal{C}^{0}\left(\Omega\right)$ and any
$\epsilon>0$.  Then
\begin{eqnarray}
\begin{array}{ll}
\forall & \left(x_{0},t_{0}\right)\in\Omega\mbox{ :} \\
\exists & v=\left(u,p\right)\in\mathcal{C}^{2}\left(\Omega\right)\mbox{ :} \\
\exists & \delta>0\mbox{ :} \\
\forall &  \left(x,t\right)\in\Omega\mbox{ :} \\
& \left(\begin{array}{l}
\|x_{0}-x\|<\delta \\
|t_{0}-t|<\delta\\
\end{array}\right)\Rightarrow g\left(x,t\right)-\epsilon< T\left(x,t,D\right)v\left(x,t\right)< g\left(x,t\right) \\
\end{array}
\end{eqnarray}
where the order above is coordinatewise, and $\epsilon$ represents
the $4$ dimensional vector that corresponds to the real number
$\epsilon$.
\end{lemma}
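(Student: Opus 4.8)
The plan is to prove this by a local, pointwise construction: fix $(x_0,t_0)\in\Omega$ and build an explicit $v=(u,p)\in\mathcal{C}^2(\Omega)$ together with $\delta>0$ so that $T(x,t,D)v$ takes values in the box $(g(x,t)-\epsilon,g(x,t))$ on the $\delta$-neighbourhood of $(x_0,t_0)$. Since $g=(f,0)$ and the last equation of (\ref{NSEq}) is the divergence constraint, I would first arrange the divergence part: choose $u$ near $(x_0,t_0)$ to be divergence free exactly, so $T_4v\equiv 0$, which sits in the required interval $(0-\epsilon,0)$ only if we are slightly careful — in fact one wants $T_4 v$ to be a small negative constant, so I would instead perturb by a term producing $\sum_i\partial_{x_i}u_i=-\epsilon/2$, e.g. adding $-\frac{\epsilon}{6}x_1$ to $u_1$, which changes the divergence by a constant while keeping everything $\mathcal{C}^2$.

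Next I would handle the three momentum equations. The operator $T_i$ is a jointly continuous function $F$ of $(x,t)$ and the finitely many derivatives $D^\alpha v$, $|\alpha|\le 2$, evaluated at $(x,t)$. The key idea is that we have complete freedom to prescribe, at the single point $(x_0,t_0)$, the values of $v$ and all its partial derivatives up to order two (using a polynomial, i.e. a truncated Taylor expansion, as the local ansatz), subject only to the divergence adjustment already made. So I would choose the $2$-jet of $v$ at $(x_0,t_0)$ — in particular the value of $\partial_t u_i(x_0,t_0)$, which appears linearly and is otherwise unconstrained — so that $T(x_0,t_0,D)v(x_0,t_0)=f(x_0,t_0)-\epsilon/2$ (and the divergence component $=-\epsilon/2$), landing strictly inside the open box $(g(x_0,t_0)-\epsilon,g(x_0,t_0))$. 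Then, by joint continuity of $F$ and of $g$, and continuity of $v$ and its derivatives, the map $(x,t)\mapsto T(x,t,D)v(x,t)-g(x,t)$ is continuous, equals $-\epsilon/2$ (coordinatewise) at $(x_0,t_0)$, hence stays in $(-\epsilon,0)^4$ on a small enough ball; this $\delta$ is the required one.

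Concretely, I would take $v$ to be a polynomial of degree $\le 2$ in $(x,t)$: write $u_i(x,t)=a_i+\sum_j b_{ij}x_j+c_i t+(\text{quadratic terms})$ and similarly for $p$, then the divergence $\sum_i\partial_{x_i}u_i$ is affine and $\partial_t u_i$, $\partial^2_{x_jx_j}u_i$ are determined by the chosen coefficients; I pick the $b_{ii}$'s to make the divergence equal to the constant $-\epsilon/2$, and pick the $c_i$'s (coefficients of $t$ in $u_i$) to absorb whatever value $F$ returns so that each momentum component of $T v-g$ equals $-\epsilon/2$ at $(x_0,t_0)$. Because $c_i$ enters $T_i$ only through the isolated term $\partial_t u_i$, this is a solvable linear adjustment for each $i$ separately.

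The main obstacle is verifying that the free coefficients genuinely decouple enough to hit the target value at $(x_0,t_0)$ simultaneously in all four components: the divergence equation ties together the $\partial_{x_i}u_i$, which also feed into the nonlinear terms $u_j\partial_{x_i}u_j$ of the momentum equations, so one must order the choices correctly — first fix the spatial $1$-jet of $u$ (respecting the divergence constraint), then the remaining second-order data, and only last solve for the $\partial_t u_i$ to correct the momentum components — and check this last step is always possible since $\partial_t u_i$ appears with coefficient $1$ and in no other equation. Once the $2$-jet at $(x_0,t_0)$ is pinned down to give value exactly $g(x_0,t_0)-\frac{\epsilon}{2}\cdot\mathbf{1}$, the passage to a neighbourhood is a routine $\varepsilon$--$\delta$ argument using joint continuity of $F$, so I would not belabour it.
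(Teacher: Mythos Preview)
Your argument is correct and follows essentially the same route as the paper: pick a $2$-jet at $(x_0,t_0)$ so that $F(x_0,t_0,\xi)$ lands strictly inside the box $(g(x_0,t_0)-\epsilon,\,g(x_0,t_0))$, realize this jet by a polynomial $v\in\mathcal{C}^2(\Omega)$, and then invoke joint continuity of $F$, $g$ and the derivatives of $v$ to obtain the $\delta$-neighbourhood. The paper simply asserts the surjectivity $\{F(x,t,\xi):\xi\in\mathbb{R}^K\}=\mathbb{R}^4$ and proceeds, whereas you verify it explicitly for the Navier--Stokes operator by first adjusting the spatial $1$-jet to fix $T_4$ and then solving for the free coefficients $\partial_t u_i$ in the momentum components---a more careful version of the same idea.
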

\begin{proof}
Note that, for every $\left(x,t\right)\in\Omega$, the function $F$
satisfies
\begin{eqnarray}
\{F\left(x,t,\xi\right)\mbox{ :
}\xi=\left(\xi_{\alpha}\right)_{|\alpha|\leq2}\in\mathbb{R}^{K}\}=\mathbb{R}^{4}\nonumber
\end{eqnarray}
so that, for every $\left(x,t\right)$ and $\epsilon>0$, there is
some $\xi^{\epsilon}\in\mathbb{R}^{K}$ such that
$F\left(x,t,\xi^{\epsilon}\right)=g\left(x,t\right)$.  Let
$v=\left(u,p\right)$ be the $\mathcal{C}^{2}$-smooth function such
that
\begin{eqnarray}
D^{\alpha}u\left(x,t\right)=\xi^{\epsilon}_{\alpha} \nonumber
\end{eqnarray}
The result now follows from the continuity of $v$, $F$ and $g$.
\end{proof}\\ \\
The following is essentially a version of Lemma \ref{FirstApprox}
above which incorporates the initial condition (\ref{Initval}).
\begin{lemma}\label{SecondApprox}
Let $g$ and $\epsilon$ be as in Lemma \ref{FirstApprox} above.
Consider any $u^{0}\in\mathcal{C}^{2}\left(\mathbb{R}^{3},
\mathbb{R}^{3}\right)$.  Then
\begin{eqnarray}
\begin{array}{ll}
\forall & x_{0}\in\mathbb{R}^{3}\mbox{ :} \\
\exists & v=\left(u,p\right)\in\mathcal{C}^{2}\left(\Omega\right)\mbox{ :} \\
\exists & \delta>0\mbox{ :} \\
& \begin{array}{ll} 1) & \begin{array}{ll}
\forall & \left(x,t\right)\in\Omega\mbox{ :} \\
& \left(\begin{array}{l}
\|x_{0}-x\|<\delta \\
|t|<\delta\\
\end{array}\right)\Rightarrow g\left(x,t\right)-\epsilon< T\left(x,t,D\right)v\left(x,t\right)< g\left(x,t\right) \\
\end{array} \\
2) & x\in\mathbb{R}^{3}\Rightarrow u\left(x,0\right)=u^{0}\left(x\right) \\
\end{array} \\
\end{array}
\end{eqnarray}
\end{lemma}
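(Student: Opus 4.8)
The plan is to reduce Lemma \ref{SecondApprox} to Lemma \ref{FirstApprox} by splitting $\Omega$ into the initial hyperplane $\mathbb{R}^{3}\times\{0\}$ and its complement, and patching together a function that matches the prescribed initial data $u^{0}$ exactly while still approximately solving the PDE near $\left(x_{0},0\right)$. First I would fix $x_{0}\in\mathbb{R}^{3}$ and recall that, by Lemma \ref{FirstApprox} applied at the point $\left(x_{0},0\right)\in\Omega$, there exist a $\mathcal{C}^{2}$-smooth $w=\left(u_{1},p_{1}\right)$ and $\delta_{1}>0$ such that $g-\epsilon<T\left(x,t,D\right)w<g$ on the $\delta_{1}$-neighbourhood of $\left(x_{0},0\right)$. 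The function $w$ constructed there is a polynomial in $\left(x,t\right)$ of degree at most $2$ whose derivatives at $\left(x_{0},0\right)$ realise a chosen vector $\xi^{\epsilon}\in\mathbb{R}^{K}$ with $F\left(x_{0},0,\xi^{\epsilon}\right)=g\left(x_{0},0\right)$; this $w$ will in general not satisfy $u_{1}\left(x,0\right)=u^{0}\left(x\right)$.

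The key step is to correct $w$ so as to enforce condition 2) without destroying condition 1). To this end I would set $v=\left(u,p\right)$ with $u\left(x,t\right)=u_{1}\left(x,t\right)+t\cdot h\left(x,t\right)+\left(u^{0}\left(x\right)-u_{1}\left(x,0\right)\right)$ and $p=p_{1}$, where $h$ is a $\mathcal{C}^{2}$ vector field to be determined. The term $u^{0}\left(x\right)-u_{1}\left(x,0\right)$ is $\mathcal{C}^{2}$ in $x$ and vanishes together with the right number of derivatives at $x=x_{0}$ (since $w$ can be chosen so that its spatial $2$-jet at $x_{0}$ agrees with that of $u^{0}$, this difference is $o\left(\|x-x_{0}\|^{2}\right)$), so that at $\left(x_{0},0\right)$ the full $2$-jet of $u$ still equals the part of $\xi^{\epsilon}$ not involving $\partial/\partial t$. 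The parameter $h$ is then used to fix the remaining time-derivative jet components of $\xi^{\epsilon}$: one chooses $h\left(x_{0},0\right)$ so that $\partial_{t}u\left(x_{0},0\right)$ equals the prescribed value $\xi^{\epsilon}_{e_{t}}$, and similarly for the mixed and second time derivatives appearing in $|\alpha|\leq 2$. Since $T$ only involves $\partial_{t}u$, $\partial_{x_{i}}u_{j}$ and $\partial_{x_{j}}^{2}u_{i}$ (no $\partial_{t}^{2}$), in fact a single choice of the constant value $h\left(x_{0},0\right)$ suffices, and one may simply take $h$ constant. Having so chosen $v$, one has $F\left(x_{0},0,\left(D^{\alpha}u\left(x_{0},0\right)\right)\right)=g\left(x_{0},0\right)$ exactly, and $u\left(x,0\right)=u^{0}\left(x\right)$ identically in $x$ by construction.

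Finally I would invoke continuity. The mapping $\left(x,t\right)\mapsto T\left(x,t,D\right)v\left(x,t\right)=F\left(x,t,\left(D^{\alpha}v\left(x,t\right)\right)_{|\alpha|\leq 2}\right)$ is continuous on $\Omega$, being a composition of the continuous $F$ with the continuous jet map of the $\mathcal{C}^{2}$ function $v$, as is $g$. Since at $\left(x_{0},0\right)$ we have arranged $g\left(x_{0},0\right)-\epsilon<T\left(x_{0},0,D\right)v\left(x_{0},0\right)=g\left(x_{0},0\right)<g\left(x_{0},0\right)+\epsilon$ coordinatewise (the middle equality, the outer strict inequalities from $\epsilon>0$), there is $\delta_{2}>0$ such that $g\left(x,t\right)-\epsilon<T\left(x,t,D\right)v\left(x,t\right)<g\left(x,t\right)$ for all $\left(x,t\right)$ with $\|x-x_{0}\|<\delta_{2}$, $|t|<\delta_{2}$. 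Taking $\delta=\min\{\delta_{1},\delta_{2}\}$ gives 1), while 2) already holds globally in $x$, completing the proof.

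The main obstacle I anticipate is the bookkeeping in the correction step: making sure the added terms $t\cdot h$ and $u^{0}\left(x\right)-u_{1}\left(x,0\right)$ adjust precisely the components of the $2$-jet at $\left(x_{0},0\right)$ that need adjusting, while leaving the already-correct components (in particular those realising $\xi^{\epsilon}$) untouched. This is where one must use the freedom in Lemma \ref{FirstApprox} to pre-align the spatial $2$-jet of $w$ at $x_{0}$ with that of $u^{0}$, and observe that the operator $T$ in (\ref{PDEOp}) involves no pure time derivatives of order two, so the correction is genuinely finite-dimensional and solvable. Everything after the jet-matching is a routine continuity argument.
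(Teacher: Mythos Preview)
Your route is genuinely different from the paper's and more elaborate than necessary. The paper does not pass through Lemma~\ref{FirstApprox} and then correct: it simply sets $u(x,t)=u^{0}(x)+\varphi(t)$ with $\varphi\in\mathcal{C}^{2}\left([0,\infty)\right)$ and $\varphi(0)=0$, and chooses $p$ freely. Condition~2) is then automatic for every $x$, and the derivatives entering $T$ at $(x_{0},0)$ are those of $u^{0}$ together with $\varphi'(0)$ and the jet of $p$; the latter two are chosen to hit the desired target, and continuity finishes exactly as in Lemma~\ref{FirstApprox}. Your construction---apply Lemma~\ref{FirstApprox}, pre-align the spatial $2$-jet of $w$ with that of $u^{0}$, then add $t\cdot h+(u^{0}(x)-u_{1}(x,0))$---arrives at essentially the same function after unwinding (indeed, once the pre-alignment is done your $h$ is forced to be zero), but the jet bookkeeping you identify as the ``main obstacle'' simply does not arise with the direct ansatz.

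There is also a slip in your final continuity step. From $T(x_{0},0,D)v(x_{0},0)=g(x_{0},0)$ you conclude that $T(x,t,D)v(x,t)<g(x,t)$ on a neighbourhood; but equality at the centre only yields $g-\epsilon<Tv<g+\epsilon$ nearby, not the strict upper bound $Tv<g$ the lemma demands. One must arrange $Tv(x_{0},0)$ to lie strictly inside the open interval $\left(g(x_{0},0)-\epsilon,\,g(x_{0},0)\right)$, say at $g(x_{0},0)-\epsilon/2$, before invoking continuity. This is easily done in the first three components via $\varphi'(0)$ or $\partial_{x_{i}}p$; in the divergence component both your construction and the paper's ansatz force $T_{4}v(x_{0},0)=\mathrm{div}\,u^{0}(x_{0})$, so for divergence-free $u^{0}$ one only gets equality there---which, however, is all that Theorem~\ref{SolEx} actually uses.
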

\begin{proof}
The proof follows similar arguments as those employed in the proof
of Lemma \ref{FirstApprox} when one sets
\begin{eqnarray}
u\left(x,t\right)=u^{0}\left(x\right)+\varphi\left(t\right)\nonumber
\end{eqnarray}
where $\varphi\in\mathcal{C}^{2}\left([0,\infty)\right)$ is an
appropriate function such that $\varphi\left(0\right)=0$.
\end{proof}\\ \\
The main result of this section is now the following.
\begin{theorem}\label{SolEx}
For any $g=\left(f,0\right)\in Y$ and any $u^{0}\in Z$, there
exists a unique $V^{\sharp}\in X_{T_{0}}^{\sharp}$ such that
\begin{eqnarray}
\widehat{T}_{0}^{\sharp}V^{\sharp}=g\label{GE}
\end{eqnarray}
\end{theorem}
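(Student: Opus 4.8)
The plan is to obtain the generalized solution $V^{\sharp}$ as the limit, in the completion $X_{T_{0}}^{\sharp}$, of a Cauchy filter built from approximate solutions. First I would use Lemma \ref{SecondApprox} to produce, for each $\epsilon > 0$, a locally finite family of $\mathcal{C}^{2}$-smooth functions $v_{j}$ defined on small neighbourhoods covering $\Omega$, each satisfying the initial condition $u_{j}(\cdot,0) = u^{0}$ and the two-sided estimate $g - \epsilon < T(x,t,D)v_{j}(x,t) < g$ on its patch. Since $\Omega = \mathbb{R}^{3}\times[0,\infty)$ is paracompact, these local pieces can be patched together, off a closed nowhere dense set $\Gamma_{\epsilon}$ arising from the patch boundaries, into a single $v_{\epsilon} \in \mathcal{C}^{2}_{nd}(\Omega)^{4}$ with $u_{\epsilon}(\cdot,0) = u^{0}$ and $g - \epsilon \leq T v_{\epsilon} \leq g$ on $\Omega\setminus\Gamma_{\epsilon}$, hence, after applying $I\circ S$, as an element of $X = \mathcal{ML}^{2}_{0}(\Omega)^{4}$ with $g - \epsilon \leq T_{0}$-image bounded appropriately in $Y\times Z$.

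Next I would pass to $X_{T_{0}}$ via the quotient map $q_{T_{0}}$ and consider the order intervals $J_{n} = [q_{T_{0}}v_{1/n}, q_{T_{0}}w_{1/n}]$ (with $w$ chosen to give a genuine sandwich), or more precisely the images under $\widehat{T}_{0}$ which are order intervals $[g - \tfrac{1}{n}, g]$ in $Y$ paired with the single point $u^{0}$ in $Z$. These form a decreasing sequence of nonempty order intervals whose intersection, computed via Theorem \ref{NLDedComp} (Dedekind completeness of $\mathcal{NL}(\Omega)$) together with full distributivity, collapses to the single element $(g, u^{0})$ of $(Y\times Z)^{\sharp} = \mathcal{NL}(\Omega)^{4}\times Z$. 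By Theorem \ref{NLConv} the associated filter $[\Sigma]$ converges to $(g,u^{0})$ in the completion, so it is a Cauchy filter; pulling back along the uniform embedding $\widehat{T}_{0}^{\sharp}$, which is an embedding precisely because $\widehat{T}_{0}$ is injective and $X_{T_{0}}$ carries the initial uniform convergence structure, yields a Cauchy filter on $X_{T_{0}}$, whose limit $V^{\sharp} \in X_{T_{0}}^{\sharp}$ satisfies $\widehat{T}_{0}^{\sharp}V^{\sharp} = (g, u^{0})$, i.e. (\ref{GE}).

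Uniqueness is immediate: $\widehat{T}_{0}^{\sharp}$ is injective, being the extension of the injective, uniformly continuous embedding $\widehat{T}_{0}$ to the completions, so at most one $V^{\sharp}$ can map to $(g,u^{0})$.

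The main obstacle I anticipate is the patching step: verifying that the locally defined smooth approximants from Lemma \ref{SecondApprox} can be glued into a single element of $\mathcal{ML}^{2}_{0}(\Omega)^{4}$ whose exceptional set is closed and nowhere dense (so that the normalization $I\circ S$ behaves correctly and Proposition \ref{ConEqDense} applies), while simultaneously preserving \emph{both} the two-sided differential inequality on the good set \emph{and} the exact initial condition at $t = 0$. Keeping the initial hyperplane $\mathbb{R}^{3}\times\{0\}$ disjoint from the exceptional set is essential, and is exactly what the separation of the PDE from the initial value via the mapping $R_{0}$ in (\ref{Rest}) is designed to make possible; here is where I would have to be careful that the cover used for patching can be chosen to respect $\{t = 0\}$, which is why Lemma \ref{SecondApprox} is stated with $|t| < \delta$ rather than $|t_{0} - t| < \delta$ around an arbitrary base point. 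The remaining verifications — that the sandwiching order intervals satisfy (\ref{DefCon2}) via full distributivity of $\mathcal{NL}(\Omega)$, and that the pullback filter is genuinely Cauchy — are routine given Theorems \ref{UOCStructure}, \ref{ConvChar}, \ref{Completion} and the completeness of $Y\times Z$'s completion established above.
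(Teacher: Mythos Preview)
Your approach is essentially the paper's: build local $\mathcal{C}^{2}$ approximants, patch them off a closed nowhere dense set (the paper does this via an explicit locally finite cover of $\Omega$ by compact $4$-dimensional intervals rather than by invoking abstract paracompactness), normalize by $I\circ S$ into $X$, and observe that the resulting sequence $(q_{T_{0}}v_{1/n})$ is Cauchy in $X_{T_{0}}$ because its $\widehat{T}_{0}$-image converges to $(g,u^{0})$ and $X_{T_{0}}$ carries the initial uniform convergence structure. Two minor corrections: for patches with base point $t_{0}>0$ you must invoke Lemma~\ref{FirstApprox}, not Lemma~\ref{SecondApprox}, since the latter only produces approximants on neighbourhoods of the initial hyperplane; and you should drop the ``order intervals $J_{n}$ on $X_{T_{0}}$'' and ``pulling back'' language --- $X_{T_{0}}$ carries no order, the sandwich $[g-\tfrac{1}{n},g]\times\{u^{0}\}$ lives only in $Y\times Z$, and the Cauchy property of the sequence already constructed on $X_{T_{0}}$ follows forward from the defining property of the initial uniform structure, not by pulling a filter back from the target.
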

\begin{proof}
Let
\begin{equation}\label{AE3}
\Omega=\bigcup_{\nu\in\mathbb{N}}C_{\nu}
\end{equation}
where, for $\nu\in\mathbb{N}$, the compact sets $C_{\nu}$ are
$4$-dimensional intervals
\begin{equation}
C_{\nu}=[a_{\nu},b_{\nu}]
\end{equation}
with $a_{\nu}=\left(a_{\nu,1},...,a_{\nu,n}\right)$,
$b_{\nu}=\left(b_{\nu,1},...,b_{\nu,n}\right)\in\mathbb{R}^{n}$
and $a_{\nu,i}\leq b_{\nu,i}$ for every $i=1,...,n$. We also
assume that $C_{\nu}$, with $\nu\in\mathbb{N}$ are locally finite,
that is,
\begin{equation}\label{AE2}
\begin{array}{ll}
\forall & \left(x,t\right)\in\Omega\mbox{ :} \\
\exists & V_{x}\subseteq\Omega\mbox{ a
neighborhood of }x\mbox{ :} \\
& \{\nu\in\mathbb{N}\mbox{ :
}C_{\nu}\cap V_{x}\neq\emptyset\}\mbox{ is finite} \\
\end{array}
\end{equation}
We also assume that the interiors of $C_{\nu}$, with
$\nu\in\mathbb{N}$, are pairwise disjoint.  We note that
such $C_{\nu}$ exist, see \cite{Forster}.\\
Select $\nu\in\mathbb{N}$ and $\epsilon>0$ arbitrary but fixed.
For any $\left(x,t\right)\in C_{\nu}$, let
$\delta_{\left(x,t\right)}>0$ be the positive number and
$v^{\epsilon}_{\left(x,t\right)}$ the function associated with
$\left(x,t\right)$ through Lemma \ref{FirstApprox}, if $t>0$, and
Lemma \ref{SecondApprox} if $t=0$.  Since $C_{\nu}$ is compact, it
follows that
\begin{eqnarray}
\begin{array}{ll}
\exists & \delta>0\mbox{ :} \\
\forall & \left(x_{0},t_{0}\right)\in C_{\nu}\mbox{ :} \\
\exists & v=\left(u,p\right)\in\mathcal{C}^{2}\left(\mathbb{R}^{4},\mathbb{R}^{4}\right)\mbox{ :} \\
& \begin{array}{ll} 1) &\left(\begin{array}{l}
\|x-x_{0}\|\leq\delta \\
|t-t_{0}|\leq \delta \\
\end{array}\right)\Rightarrow g\left(x,t\right)-\epsilon\leq
T\left(x,t,D\right)v\left(x,t\right)\leq g\left(x,t\right)\mbox{,
}\left(x,t\right)\in\Omega \\
2) & t_{0}=0 \Rightarrow u\left(x\right)=u^{0}\left(x\right)\mbox{, }x\in\mathbb{R}^{3} \\
\end{array} \\
\end{array}\label{AE}
\end{eqnarray}
Subdivide $C_{\nu}$ into $n$-dimensional intervals
$I_{\nu,1},...,I_{\nu,\mu_{\nu}}$ such that their interiors are
pairwise disjoint and
\begin{eqnarray}
\begin{array}{ll}
\forall & \left(x_{0},t_{0}\right)\mbox{, }\left(x,t\right)\in I_{\nu,i}\mbox{ :} \\
& \begin{array}{ll}
1) & \|x_{0}-x\|<\delta \\
2) & |t_{0}-t|<\delta \\
\end{array} \\
\end{array}\nonumber
\end{eqnarray}
If $I_{\nu,i}\cap\left(\mathbb{R}^{3}\times\{0\}\right)
=\emptyset$, take $a_{i}$ to be the center of the interval
$I_{\nu,j}$.  Then by (\ref{AE}) there exists
$v^{\nu,i}=\left(u,p\right)\in\mathcal{C}^{2}\left(\mathbb{R}^{4}\times
\mathbb{R}^{4}\right)$ such that
\begin{equation}\label{AE1}
g\left(x,t\right)-\epsilon\leq
T\left(x,t,D\right)v^{\nu,i}\left(x,t\right)\leq
g\left(x,t\right)\mbox{, }\left(x,t\right)\in I_{\nu,i}
\end{equation}
If, on the other hand,
$I_{\nu,i}\cap\left(\mathbb{R}^{3}\times\{0\}\right)
\neq\emptyset$, let $a_{i}$ denote the projection of the midpoint
of $I_{\nu,i}$ on the hyperplane $\mathbb{R}^{3}\times\{0\}$. Then
by (\ref{AE}) there exists
$v^{\nu,i}=\left(u,p\right)\in\mathcal{C}^{2}\left(\mathbb{R}^{4}\times
\mathbb{R}^{4}\right)$ such that (\ref{AE1}) holds and
\begin{eqnarray}
u\left(x,0\right)=u^{0}\left(x\right)\mbox{, }\left(x,0\right)\in
\left(\mathbb{R}^{3}\times\{0\}\right)\cap I_{\nu,i}\label{AE3}
\end{eqnarray}
Now set
\begin{eqnarray}
v^{\epsilon}=\left(u^{\epsilon}_{1},u^{\epsilon}_{2},u^{\epsilon}_{3},p^{\epsilon}\right)
=\sum_{\nu\in\mathbb{N}}\left(\sum_{i=1}^{\mu_{\nu}}v^{\nu,i}\chi_{I_{nu,i}}\right)
\end{eqnarray}
where $\chi_{I_{\nu,i}}$ is the characteristic function of
$I_{\nu,i}$.  Clearly, $v^{\epsilon}=
\left(u^{\epsilon},p^{\epsilon}\right)$ is
$\mathcal{C}^{2}$-smooth everywhere except on a closed nowhere
dense set, which has measure $0$, and
$u^{\epsilon}\left(x,0\right)=u^{0}\left(x\right)$ everywhere
except on a closed nowhere dense subset of
$\mathbb{R}^{3}\times\{0\}$.\\
Now set $w^{\epsilon}=\left(u^{\epsilon *}_{1},u^{\epsilon *}_{2},
u^{\epsilon *}_{3},p^{\epsilon *}\right)$ where, for $j=1,...,3$
\begin{eqnarray}
u^{\epsilon *}_{j}=\left(I\circ
S\right)\left(u^{\epsilon}_{j}\right)\nonumber
\end{eqnarray}
and
\begin{eqnarray}
p^{\epsilon *}= \left(I\circ S\right)\left(p^{\epsilon}\right)
\end{eqnarray}
Clearly the function $w^{\epsilon}$ belongs to $X$.  What is
more,in view of (\ref{AE1}) through (\ref{AE3}), it follows that
\begin{eqnarray}
g-\epsilon\leq Tw^{\epsilon}\leq g\nonumber
\end{eqnarray}
and
\begin{eqnarray}
R_{0}w^{\epsilon}=u^{0}\nonumber
\end{eqnarray}
so that the sequence
$\left(T_{0}w_{n}\right)=\left(T_{0}w^{\frac{1}{n}}\right)$
converges to $\left(g,u^{0}\right)$ in $Y\times Z$.  For each
$n\in\mathbb{N}$, let $W_{n}$ denote the
$\sim_{T_{0}}$-equivalence class generated by the function
$w^{\frac{1}{n}}$.  The sequence $\left(W_{n}\right)$ is Cauchy in
$X_{T_{0}}$, and since $\widehat{T}_{0}$ is uniformly continuous,
there exists $V^{\sharp}\in X_{T_{0}}^{\sharp}$ that satisfies
(\ref{GE}). Moreover, $V^{\sharp}$ is unique, since the mapping
$\widehat{T}_{0}^{\sharp}$ is a uniformly continuous embedding.
\end{proof}\\ \\
The uniqueness of the generalized solution should not be
misinterpreted.  Note that the completion of $X_{T_{0}}$ consists
of equivalence classes of Cauchy filters on $X_{T_{0}}$, under the
equivalence relation
\begin{eqnarray}
\mathcal{F}\sim_{C}\mathcal{G}\Leftrightarrow
\left(\begin{array}{ll}
\exists & \mathcal{H}\mbox{ a Cauchy filter :} \\
& \mathcal{H}\subseteq \mathcal{F}\cap\mathcal{G} \\
\end{array}\right)\nonumber
\end{eqnarray}
In view of this, the solution $V^{\sharp}$ is actually the
equivalence class of filters $\mathcal{F}$ on $X_{T_{0}}$ such
that $\widehat{T}_{0}\left(\mathcal{F}\right)$ converges to
$\left(g,u^{0}\right)$ in $Y\times Z$.  What is more, $V^{\sharp}$
contains also all \textit{classical}, or smooth, solutions to
(\ref{NSEq}) through (\ref{Initval}), as well as all
\textit{nonclassical} solutions $v=\left(u,p\right)\in
\mathcal{C}^{2}_{nd}\left(\Omega\right)^{4}$, since each such a
solution generates a Cauchy sequence in $X_{T_{0}}$. Hence our
notion of a generalized solution is \textit{consistent} with the
usual classical \textit{and} nonclassical solutions in
$\mathcal{C}_{nd}^{2}\left(\Omega\right)^{4}$ to (\ref{NSEq})
through (\ref{Initval}).  Note that the method presented here for
the three dimensional Navier-Stokes equations applies equally well
to any dimension $n\geq 2$.

\section{Conclusion}

We have constructed an order isomorphic representation
$\mathcal{ML}\left(X\right)$ of the quotient space
$\mathcal{M}\left(X\right)$ consisting of normal lower
semi-continuous functions on $X$.  A nontrivial uniform
convergence structure on $\mathcal{ML}\left(X\right)$, which
induces the order convergence structure was constructed solely in
terms of the order on $\mathcal{ML}\left(X\right)$. The completion
of the uniform convergence space $\mathcal{ML}\left(\Omega\right)$
is obtained as the set $\mathcal{NL}\left(X\right)$ of nearly
finite normal lower semi-continuous functions on $X$.  This result
essentially relies on the fact that $\mathcal{NL}\left(X\right)$
is the Dedekind completion of $\mathcal{ML}\left(X\right)$.  Hence
we have established a topological type model for the Dedekind
completion of the space $\mathcal{ML}\left(X\right)$.  This
includes the case when $X=\Omega$ is a subset of $\mathbb{R}^{n}$,
which is relevant to PDEs.  This makes it possible to enrich the
Order Completion Method for arbitrary nonlinear PDEs of the form
(\ref{PDE}), by reformulating it within the framework of uniform
convergence spaces.  In this regard, we obtained the existence of
generalized solutions to the Navier-Stokes equations in three
spatial dimensions, subject to an initial condition.


\begin{thebibliography}{99}

\bibitem{Alefeld} \textsc{Alefeld G and Herzberger J}, Introduction to
interval computations, Academic Press, 1983.

\bibitem{Anguelov} \textsc{Anguelov R}, Dedekind order
completion of C(X) by Hausdorff continuous functions Quaestiones
Mathematicae, {\bf 27}(2004) 153-170.

\bibitem{Anguelov et al} \textsc{Anguelov R, Markov S and Sendov B}, The set of Hausdorff continuous functions --- the
largest linear space of interval functions, Reliable Computing
\textbf{12} (2006), 337-363.

\bibitem{Anguelov et al 2} \textsc{Anguelov R, Markov S and Sendov B}, Algebraic operations on the space of Hausdorff continuous
interval functions, Proceedings of the International Conference on
Constructive Theory of Functios, 1-7 June 2005, Varna, Bulgaria,
Marin Drinov Acad. Publ. House, Sofia, 2006, 35 - 44.

\bibitem{Anguelov and Rosinger} \textsc{Anguelov R and Rosinger E E}, Hausdorff continuous solutions of nonlinear PDEs through
the order completion method, Quaestiones Mathematicae \textbf{28}
no. 2 (2005) 271-285.

\bibitem{Anguelov and Rosinger 1} \textsc{Anguelov R and Rosinger
E E}, Solving large classes of nonlinear systems of PDE's,
Computers and Mathematics with Applications \textbf{53} (2007)
491-507

\bibitem{Anguelov and van der Walt} \textsc{Anguelov R and van der Walt J H}, Order convergence structure on $\mathcal{C}\left(X\right)$, Quaestiones Mathematicae \textbf{28} no.
4 (2005) 425-457.

\bibitem{Arnold} \textsc{Arnold V I}, Lectures on PDEs,
Springer Universitext, 2004.

\bibitem{Baire} \textsc{Baire R}, Lecons sur les fonctions
discontinues, Collection Borel, Paris, 1905.

\bibitem{Bartle} \textsc{Bartle R G}, The elements of real analysis 2nd
Ed., John Wiley and Sons, New York, Chichester, Brisbane, Toronto,
Singapore, 1976.

\bibitem{Beattie and Butzmann} \textsc{Beattie R and Butzmann H P}, Convergence structures and applications to functional analysis,
Kluwer Academic Plublishers, Dordrecht, Boston, London, 2002.

\bibitem{Birkhoff} \textsc{Birkhoff G}, Lattice theory, AMS, Providence,
Rhode Island, 1973.

\bibitem{Dilworth} \textsc{Dilworth R P}, The normal completion of the lattice of continuous
functions, Trans. AMS (1950), 427-438.

\bibitem{Evans} \textsc{Evans L C}, Partial differential
equations, AMS Graduate Studies in Mathematics \textbf{19}, AMS,
1998.

\bibitem{Fine Gillman and Lambek} \textsc{Fine N J, Gillman L and Lambek J}, Rings of quotients of rings of functions, McGill
University Press, Montreal, 1965.

\bibitem{Forster}
\textsc{Forster O}, Analysis 3: Integralrechnung im
$\mathbb{R}^{n}$ mit Anwendungen, Friedr. Vieweg, Braunschweig,
Wiesbaden, 1981.

\bibitem{Gahler 1} \textsc{G\"{a}hler W}, Grundstrukturen der
analysis I, Birkh\"{a}user Verlag, Basel, 1977.

\bibitem{Gahler 2}  \textsc{G\"{a}hler W}, Grundstrukturen der
analysis II, Birkh\"{a}user Verlag, Basel, 1978.

\bibitem{Kelley} \textsc{Kelley J}General topology, Van
Nostrand, 1955.

\bibitem{Luxemurg and Zaanen} \textsc{Luxemburg W A J and Zaanen
A C}, Riesz spaces I, North Holland, Amsterdam, 1971.

\bibitem{Markov} \textsc{Markov S}, Calculus for interval functions of a
real variable, Computing \textbf{22 }(1979) 325-337.

\bibitem{Moore and Smith} \textsc{Moore E and Smith H}, A general theory of
limits, A. J. Math \textbf{44} (1922) 102-121.

\bibitem{Neuberger 1} \textsc{Neuberger J W}, Sobolev gradients and differential
equations, Springer Lecture Notes in Mathematics, vol. 1670, 1997.

\bibitem{Neuberger 4} \textsc{Neuberger J W}, Prospects of a central theory of partial differential
equations, Math. Intelegencer \textbf{27} no. 3 (2005) 47-55.

\bibitem{Obergugenberger and Rosinger} \textsc{Oberguggenberger M
B and Rosinger E E}, Solution of continuous nonlinear PDEs through
order completion, North-Holland, Amsterdam, London, New York,
Tokyo, 1994.

\bibitem{Ordman} \textsc{Ordman E T}, Convergence almost
everywhere is not topological, A. M. Math. Mo. \textbf{73} (1966),
182-183.

\bibitem{Peressini} \textsc{Peressini A}, Ordered topological vector
spaces, Harper \& Row, New York, Evanston, London, 1967.

\bibitem{Rosinger 3} \textsc{Rosinger E E}, Nonlinear partial differential equations, an algebraic view of generalized
solutions, North Holland Mathematics Studies, vol. 164, 1990.

\bibitem{Rosinger and van der Walt} \textsc{Rosinger E E and van der Walt J H}, Beyond
topology, To appear.

\bibitem{Sendov} \textsc{Sendov B}, Hausdorff approximations, Kluwer
Academic, Boston, 1990.

\bibitem{van der Walt} \textsc{van der Walt J H}, Order convergence in sets of
Hausdorff continuous functions, Honors Essay, University of
Pretoria, 2004.

\bibitem{van der Walt 1} \textsc{van der Walt J H}, Order convergence on
Archimedean vector lattices with applications, MSc Thesis,
University of Pretoria, 2006.

\bibitem{van der Walt 3} \textsc{van der Walt J H}, The order completion method for systems of nonlinear PDEs:  Pseudo-topological
perspectives, Technical Report UPWT 2007/07, University of
Pretoria.

\bibitem{van der Walt 4} \textsc{van der Walt J H}, On the completion of uniform convergence space and an application to nonlinear PDEs, Technical Report 2007/14.

\end{thebibliography}
\end{document}